\newtheorem{thm}{Theorem} 
\newtheorem{lem}[theorem]{Lemma}
\newtheorem{prop}{Proposition}
\newtheorem{assumeA}{Assumption A.\hspace*{-1.2mm}}
\newtheorem{rem}{Remark}
\newenvironment{assumeAprime}[1]{%
	\manualtheoreminner
}{\endmanualtheoreminner}
\newenvironment{lemPrime}[1]{%
	\manuallemmaprime
}{\endmanuallemmaprime}
\newcommand{\remove}[1]{}
\newcommand{\given}[2]{\left. #1 \right| #2}
\newcommand{\ilparenthesis}[1]{( #1 )}
\newcommand{\ilbracket}[1]{[ #1 ]} 
\newcommand{\ilset}[1]{\{ #1 \}}
\def \set#1{\left \{#1 \right \}}
\newcommand{\bracket}[1]{\left[  #1  \right]}
\def \parenthesis#1{\left (#1 \right)}
\newcommand{\norm}[1]{\| #1 \|} 
\newcommand{\abs}[1]{\left| #1 \right|}
\newcommand{\indicator}{\mathbb{I}}
\newcommand{\diff}{\mathrm{d}} 
\newcommand{\real}{\mathbb{R}}
\newcommand{\field}{\mathcal{F}}
\newcommand{\Prob}{\mathbbm{P}}
\newcommand{\E}{\mathbbm{E}}
\newcommand{\Var}{\mathrm{Var}}
\newcommand{\transpose}{\mathsmaller{T}}
\newcommand{\tr}{\mathrm{tr}}
\newcommand{\sign}{\mathrm{sign}}
\newcommand{\zero}{\boldsymbol{0}}
\newcommand{\bA}{\bm{A}}
\newcommand{\bB}{\bm{B}}
\newcommand{\bC}{\bm{C}}
\newcommand{\bD}{\bm{D}}
\newcommand{\bH}{\bm{H}}
\newcommand{\bI}{\bm{I}}
\newcommand{\bU}{\bm{U}}
\newcommand{\bW}{\bm{W}}
\newcommand{\bX}{\bm{X}}
\newcommand{\bd}{\bm{d}}
\newcommand{\bg}{\bm{g}}
\newcommand{\bu}{\bm{u}}
\newcommand{\bv}{\bm{v}}
\newcommand{\bx}{\bm{x}}
\renewcommand{\alpha}{\upalpha}
\renewcommand{\beta}{\upbeta}
\renewcommand{\gamma}{\upgamma}
\renewcommand{\delta}{\updelta}
\renewcommand{\epsilon}{\upepsilon}
\renewcommand{\varepsilon}{\upvarepsilon}
\renewcommand{\zeta}{\upzeta}
\renewcommand{\eta}{\upeta} 
\renewcommand{\theta}{\uptheta}
\renewcommand{\vartheta}{\upvartheta}
\renewcommand{\iota}{\upiota}
\renewcommand{\kappa}{\upkappa}
\renewcommand{\lambda}{\uplambda}
\renewcommand{\mu}{\upmu}
\renewcommand{\nu}{\upnu}
\renewcommand{\xi}{\upxi}
\renewcommand{\pi}{\uppi}
\renewcommand{\rho}{\uprho}
\renewcommand{\varrho}{\upvarrho}
\renewcommand{\sigma}{\upsigma}
\renewcommand{\tau}{\uptau}
\renewcommand{\upsilon}{\upupsilon}
\renewcommand{\phi}{\upphi}
\renewcommand{\varphi}{\upvarphi}
\renewcommand{\chi}{\upchi}
\renewcommand{\psi}{\uppsi}
\renewcommand{\omega}{\upomega} 
\newcommand{\bbeta}{\boldsymbol{\upbeta}}
\newcommand{\bDelta}{\boldsymbol{\Delta}} 
\newcommand{\bzeta}{\boldsymbol{\upzeta}}
\newcommand{\btheta}{\boldsymbol{\uptheta}}
\newcommand{\bvartheta}{\boldsymbol{\uptheta}^* }
\newcommand{\bLambda}{\boldsymbol{\Lambda}}
\newcommand{\bGamma}{\boldsymbol{\Gamma}}
\newcommand{\bmu}{\boldsymbol{\upmu}} 
\newcommand{\bxi}{\boldsymbol{\upxi}}
\newcommand{\bSigma}{\boldsymbol{\Sigma}}
\newcommand{\bPsi}{\boldsymbol{\Psi}}
\newcommand{\gain}{a}
\newcommand{\perturb}{c} 
\newcommand{\direction}{\bu }
\newcommand{\tperturb}{\widetilde{c}}
\newcommand{\loss}{f}
\newcommand{\lossNoisy}{F}   
\newcommand{\hbg}{\hat{\bg}}
\newcommand{\hbtheta}{\hat{\btheta}}
\newcommand{\bias}{\bbeta}  
\newcommand{\noise}{\bxi}  
\newcommand{\LipsPara}{\mathrm{L}}
\newcommand{\convexPara}{\mathrm{C}}
\newcommand{\algoName}[1]{\textsc{#1}} 
\newcommand{\obH}{\overline{{\bH}}}
\newcommand{\hbH}{\hat{\bH}}
\newcommand{\mappingPD}{\bm{p}} 
\newcommand{\Dimension}{d}
\newcommand{\funDensity}{p}
\newcommand{\funTest}{q}
\newcommand{\radius}{\updelta}
\newcommand{\correction}{\upvarepsilon}
\newcommand{\sampleIndex}{i}
\newcommand{\sampleTotal}{I}
\newcommand{\sampleBatch}{J}
\newcommand{\BoundHessianMoment}{\updelta}
\newcommand{\normalDist}{\mathrm{N}}
\newcommand{\gainH}{w}
\newcommand{\gainHweighted}{\overline{\gainH}}
\newcommand{\lossNoise}{\upvarepsilon}
\newcommand{\obg}{\overline{\bg}}
\newcommand{\iterTotal}{K}
\newcommand{\obtheta}{\overline{\btheta}}
\title[Hessian Estimation via Stein's Identity  in Black-Box Problems]{Hessian Estimation via Stein's Identity  in Black-Box Problems} 
\let\Ginclude@graphics\@org@Ginclude@graphics
\begin{document}
	
	\maketitle
	
	\begin{abstract}

 When the available information is noisy zeroth-order (ZO) oracle,  stochastic approximation   methods are popular for estimating the root of the multivariate gradient equation. Inspired by  the  Stein's identity, this work  establishes  a novel Hessian approximation scheme. 
 				We compare it alongside  second-order simultaneous perturbation stochastic approximation  (\algoName{2SPSA}) in  \citep{spall2000adaptive}.  On the basis of the almost sure convergence and the  \emph{same} convergence rate, 
 				\algoName{2SPSA} requires \emph{four} ZO queries, while ours requires \emph{three} ZO queries. Moreover, \algoName{2SPSA} requires \emph{two} statistically independent perturbations and \emph{two} differencing stepsizes, while ours  requires generating \emph{one} perturbation vector only and tuning \emph{one} differencing stepsize only. 
 				 					Besides, the weighting mechanism for the Hessian estimate is generalized and the smoothness restriction on the loss function is relaxed compared to \algoName{2SPSA}.  
	Finally, we present numerical support for the reduced per-iteration ZO query complexity.

	\end{abstract}
	
	\begin{keywords}  stochastic
	optimization; Hessian estimation; Stein's identity; gradient-free methods
	\end{keywords}

	\section{Introduction}  
 Stochastic approximation (SA) has been widely applied in the stochastic  black-box (a.k.a. derivative-free) problem. 
	Let $\btheta\in\real^{\Dimension}$ concatenate all the adjustable model parameters.  
	Let the system stochasticity be represented by the random variable $\upomega$ following a probability distribution $\Prob$ on its domain $\Omega$.     Consider finding the minimizer for  a twice-differentiable bounded-from-below loss function $ \loss (\cdot ): \real^p \to \real $:
	\begin{equation}\label{eq:SAsetup}
 \bvartheta \equiv 	\arg\min_{\btheta\in\real^{\Dimension}} \loss\ilparenthesis{\btheta}\,, \text{ where } \loss\ilparenthesis{\btheta} \equiv 
		\E_{\upomega\sim\Prob  } \bracket{ \lossNoisy \ilparenthesis{\btheta, \upomega}}\,. 
	\end{equation}
	In (\ref{eq:SAsetup}), $ \lossNoisy\ilparenthesis{\cdot,\cdot}:\real^{\Dimension}\times\Omega\mapsto\real $ evaluated at $ \ilparenthesis{\btheta,\upomega} $ represents one \emph{noisy} observation   of $\loss\ilparenthesis{\btheta}$ corrupted by  $\upomega$.   It is common that these zeroth-order (ZO) queries $ \lossNoisy\ilparenthesis{\cdot,\cdot} $ are \emph{expensive} to evaluate. Under the aforementioned setup,  the generic SA recursions are widely  applied:  
	\begin{subnumcases}{\label{eq:SA} \hbtheta_{k+1} = 	 } \hbtheta_k - \gain_k \hbg_k  \,, \hspace{.3in} \quad  \text{stochastic 1st-order  method using ZO oracles,}  \label{eq:SA1st} 
\\ \hbtheta_k - \gain_k \hbH_k^{-1} \hbg_k \,, \quad \text{stochastic 2nd-order  method using ZO oracles,}\quad  \label{eq:SA2nd}
\end{subnumcases} 
where $\hbtheta_k$ denotes the recursive estimate at the $k$th iteration, $\gain_k$ is positive stepsize (gain),  $ \hbg_k $ represents an estimate for the gradient function $\bg\ilparenthesis{\btheta}\equiv\nabla\loss\ilparenthesis{\btheta}$ evaluated at $\hbtheta_k$, and $\hbH_k $ represents   an estimate for the Hessian function $\bH\ilparenthesis{\btheta} \equiv \nabla^2\loss\ilparenthesis{\btheta}$ evaluated at $\hbtheta_k$. Besides SA, random search (including stochastic ruler, stochastic comparison, simulated
annealing, etc.) is also useful in solving (\ref{eq:SA}), but it will not be our focus.

	\subsection{Why Do We Care Second-Order SA in Black-Box Problems?}\label{sect:damping}
The SA  algorithms (\ref{eq:SA}) \emph{in part}\footnote{   Subtleties relating to both  the noisy ZO oracles and the stepsize are  fundamental to enable (\ref{eq:SA}) to be effective for (\ref{eq:SAsetup}).  } stem  from the localized model $	 \bracket{\loss\ilparenthesis{\btheta} + \bd^\transpose\bg\ilparenthesis{\btheta }  +  \bd ^\transpose\bB\ilparenthesis{\btheta} \bd /2 } $ constructed within a  neighborhood $ \ilset{\btheta+\bd: \norm{\bd}\le \radius} $, where $ \bB\ilparenthesis{\cdot} $ is some curvature matrix. In a nutshell, letting $\bB\ilparenthesis{\btheta}=\LipsPara_2  \bI$ (where $\LipsPara_2$ upper-bounds the spectral norm of the Hessian function $\bH\ilparenthesis{\cdot}$ over the domain) and  $ \bB\ilparenthesis{\btheta} = \bH\ilparenthesis{\btheta} $ motivate   (\ref{eq:SA1st}) and (\ref{eq:SA2nd}), respectively.

Currently,   (\ref{eq:SA1st}) remains dominant over  (\ref{eq:SA2nd}), largely because (\ref{eq:SA2nd}) suffers from       (i) the model-trust region/radius issue and (ii) expensive computational cost. The issue (i) becomes severe when we naively implement (\ref{eq:SA2nd}) around a region with negative curvature for \emph{nonconvex} loss functions. On the contrary, issue (i) does not arise in   (\ref{eq:SA1st}) because  the conservative local model ($ \bB\ilparenthesis{\btheta} = \LipsPara_2\bI $) enforces \emph{extremely  small} updates. 
Besides, large parameter space $\Dimension$  and/or large  dataset size $\sampleTotal$ in issue (ii)  may       deter (\ref{eq:SA2nd})  to be applied in practical     problems.   
  The storage, update, and inversion of the $\Dimension^2$-entry curvature matrix $\bB\ilparenthesis{\hbtheta_k}$ may  be prohibitive\footnote{	Large $\sampleTotal$ makes it costly to evaluate Hessian, and large $\Dimension$ makes it costly to invert the Hessian.  Each Newton iteration requires $O(\sampleTotal\Dimension^2)$ to evaluate the \emph{exact} Hessian (for the sample) and $O(\Dimension^3)$ to invert it.  }.   
  	Fortunately, using the damping  techniques in Levenberg-Marquardt method  {(see Remark~\ref{rem:damping})},   issue  (i) can be largely overcome.  To address     issue (ii), researchers may impose low-rank, diagonal, and other structures onto the curvature matrix   for easily storing/computing/inverting $\bB(\cdot)$.  
  	\begin{rem}
  		\label{rem:damping} 
  		Computing  $
  		\arg\min_{\bd: \norm{\bd}\le \radius} \bracket{ \loss\ilparenthesis{\btheta} + \bd^\transpose \bg\ilparenthesis{\btheta}  + \bd^\transpose\bB \ilparenthesis{\btheta} \bd  /2} $ is equivalent to computing $
  		\arg\min_{\bd \in\real^{\Dimension} } \bracket{ \loss\ilparenthesis{\btheta} + \bd^\transpose  \bg\ilparenthesis{\btheta}  +  \bd^\transpose\parenthesis{\bB \ilparenthesis{\btheta}+\correction\bI } \bd/2 } \equiv - \ilparenthesis{\bB\ilparenthesis{\btheta} + \correction\bI}^{-1} \bg\ilparenthesis{\btheta} $. Although 	$\correction$ is a complicated function of $\radius$,    we can   work with $\correction$ \emph{directly}.
  	\end{rem}
  
	The   estimates from   (\ref{eq:SA1st})   suffer from slow convergence rate in later iterations after  a sharp decline during early iterations. When $\hbtheta_k$ is in the vicinity of $\bvartheta$,  the algorithmic scheme (\ref{eq:SA2nd}) offers multiple edges:  (a)  resulting $\hbtheta_k$   remains intact under \emph{linear} mappings (imposed on $\btheta$);  (b) they avoid\footnote{For   (\ref{eq:SA1st}) using ZO queries,  the gain sequence in the form of $ \gain_k = \gain/k $ must satisfy $ \gain >\ilbracket{  3\uplambda_{\min} \ilparenthesis{\bH\ilparenthesis{\bvartheta}}}^{-1} $ \citep{spall1992multivariate} in order to attain the fastest convergence rate of $ O(k^{-\nicefrac{1}{3}}) $.  Second-order methods (\ref{eq:SA2nd}) can achieve the  \emph{optimum} rate \emph{without}  
	 	knowing the minimum eigenvalue of $\bH\ilparenthesis{\bvartheta}$. } the need for hyper-parameter tuning; (c) faster convergence when the iterate $\hbtheta_k$ is close to the optima ({defined as $\bvartheta$ in (\ref{eq:SA})}) where the underlying loss function  $\loss\ilparenthesis{\cdot}$ is locally quadratic; (d) local curvature exploitation (preconditioning) makes      the loss surface more isotropic\footnote{In stochastic optimization, the asymptotic behaviors of the stochastic optimizer only cares about how many data you've seen. The optimization problem becomes an estimation problem. 
	 	The asymptotic convergence behavior of stochastic methods, which cares only about amount of data seen, kicks in sooner when second-order information are approximated, since navigating the \emph{curved} loss landscape stops being the bottleneck (but in well-conditioned problems it's already not the bottleneck.) } and mitigate the     ill-conditioning effects.

	\subsection{Perspectives on Randomness  and Data  } \label{sect:data}
	{To facilitate  discussion in Sect.~\ref{sect:prior}, Sect.~\ref{sect:algo}, and Sect.~\ref{sect:classification},  we comment on  the randomness  $\Omega\ni \upomega\sim\Prob $ in (\ref{eq:SAsetup}) that  inevitably arise when the function measurements are collected from either physical experiments or computer simulation.}  
	
	\paragraph{Data-Stream (Fresh Samples)}
	For online learning where fresh  sample arrives sequentially, $\upomega\in\Omega$ may represent some additive/multiplicative noise imposed on the underlying loss.
	When data come in a streaming fashion,  it is natural  to assume that all the  sample points  $\upomega$'s are independently drawn from $\Omega$ and identically distributed according to the distribution  $\Prob$.

	\paragraph{Fixed Dataset (Collected Samples)} For empirical risk minimization (ERM) where only a fixed number (say $\sampleTotal$) of (training) samples\footnote{Although the experimenter only has  access to the  \emph{training} error (based on the fixed  $\sampleTotal$ samples)   in black-box problems,  the ultimate goal for ERM should be minimizing the \emph{generalization} error. Note that   \emph{deterministic} optimization applied on given dataset (deeming the empirical risk loss  as a deterministic loss function) does \emph{not}  work in terms of the \emph{generalization} performance. } are available, it is standard   practice to \emph{reuse}  the given $\sampleTotal$ samples using    a  (mini-)batch size of $\sampleBatch$ ($1\le \sampleBatch\le \sampleTotal$).  When sampling \emph{with} replacement is applied, all the $\sampleBatch$-elements subsets of $ \set{1,\cdots,\sampleTotal} $ constitute $\Omega$, and
	$\Prob$ places   a probability mass of $ \nicefrac{\sampleBatch!(\sampleTotal-\sampleBatch)!}{\sampleTotal!} $ on each $\omega\in\Omega$. Unlike sampling \emph{without} replacement (exhausting every sample within one epoch), this  scenario where $ \omega\in\Omega $ following uniform distribution $\Prob$ can still  fit into   (\ref{eq:SAsetup}). 
	
	\remove{    When sampling without replacement is applied,  the experimenter will use every sample within one epoch. }

  \subsection{Prior Work on Second-Order SA Using ZO Queries}\label{sect:prior}

  Second-order estimation using ZO oracles can be  traced back to \citep{fabian1971stochastic}, which requires $ O(\Dimension^2) $ ZO queries   per iteration. \citep{spall2000adaptive} proposed a simultaneous perturbation version of the Hessian estimate, which costs \emph{four} ZO queries  per iteration. Later \citep{bhatnagar2015simultaneous} considered a similar estimation form using \emph{three} ZO queries per iteration, yet the Hessian estimator involves several   contrived constants.      {We note   that    \citet{martens2015optimizing,wang2017stochastic,agarwal2019efficient}  use first-order oracle,   \citet{agarwal2017second}  uses Hessian--vector-product oracle, and \citet{sohl2014fast,byrd2016stochastic,saab2019multidimensional} use  second-order oracle.} Here we  assume that only noisy ZO oracle  is  available, so   methods using other oracles  are beyond the scope of our comparison/discussion.

  \paragraph{Core Budget Indicator}
  For problem (\ref{eq:SAsetup}), the     \emph{ZO query complexity} (to achieve certain level of accuracy) is usually the \emph{key}  budget indicator.  When the number of ZO queries per iteration is fixed,  the \emph{iteration/runtime complexity} may  be used interchangeably with oracle complexity in performance measure.  
  Besides, the floating-point-operations (FLOPs) per iteration may also be important for experimenters in high-dimensional problems, see \citet{zhu2019efficient}.

 \paragraph{Convergence Notion} When the loss function in ERM is composed of a finite   number of summands, notions of convergence and rates of convergence are in line with those in deterministic optimization.  For example, in \citet{johnson2013accelerating, martens2015optimizing, byrd2016stochastic, sohl2014fast, schraudolph2007stochastic}, rates of convergence are linear or quadratic as a measure of \emph{iteration-to-iteration improvement}  in the empirical risk function. In contrast, we follow the traditional SA notion, including applicability to general noisy loss functions (discussed in Sect.~\ref{sect:data}),   and stochastic notions of convergence and rates of convergence based on \emph{sample-points (almost surely, a.s.)} and convergence in distribution.

  \subsection{Overview and   Contribution}
 \label{sect:contribution}
  
  The remainder of this paper is organized as follows. Sect.~\ref{sect:Motivation}  conveys the motivation behind our novel Hessian approximation based on Stein's identity and presents implementation details. Sect.~\ref{sect:Theory}
   provides theoretical justification for the proposed second-order (damped) methods.  Sect.~\ref{sect:Numerical} illustrates the numerical performance and \algoName{2SPSA}.  Sect.~\ref{sect:Concluding} lists some concluding remarks and envisions some future directions. Before proceeding, let us outline the key  difference between ours and \algoName{2SPSA}.
  
 \begin{enumerate}[1)]
 	\item  On the basis that  the fastest convergence rate remains at $ O(k^{-\nicefrac{1}{3}}) $,   the Hessian estimate (\ref{eq:Hhat-3}) to appear only requires \emph{three} ZO queries, whereas \algoName{2SPSA} requires \emph{four} ZO queries.  Moreover, (\ref{eq:Hhat-3}) only requires generating \emph{one} perturbation vector via Monte Carlo and tuning one differencing stepsize, while \algoName{2SPSA} requires \emph{two} statistically independent perturbations and \emph{two} differencing stepsizes.

 	\item    The smoothing rate $\gainH_k$ in (\ref{eq:Newton-2}) is allowed to decrease as long as A.\ref{assume:Stepsize} is met, whereas \algoName{2SPSA} considers direct averaging only.

 	\item 
 	Our Hessian estimator is  symmetric by construction, while \algoName{2SPSA} in \citet{spall2000adaptive} requires an extra to manually symmetrize its Hessian estimate.   Moreover, our estimator has a rather clean and elegant form compared to \algoName{2RDSA} in \citet{prashanth2016adaptive}.

\item  The restriction of  ``four-times continuously differentiable with bounded fourth-order derivatives'' in \algoName{2SPSA} is changed to  ``thrice continuously differentiable with Lipschitz-continuous third-order derivatives.'' 

\item 
Based on Stein's identity, the motivation behind our gradient/Hessian estimators and the corresponding derivation for bias/variance are greatly simplified compared to \algoName{2SPSA}.

 \end{enumerate}

	\section{Motivation,  Description, and Implementation}\label{sect:Motivation}

	\subsection{Stein's Identity Motivates Our  Gradient/Hessian Estimators}
	
 {Note that the core to (\ref{eq:SA})  are the  gradient and Hessian estimators. Both  \citet{spall2000adaptive}  and \citet{prashanth2016adaptive}
	resort to the fundamental Taylor's theorem,  and they require ``four-times continuously differentiable with bounded fourth-order derivatives''  to achieve the optimal convergence rate of $ O(k^{-\nicefrac{1}{3}}) $. On the contrary,  Stein's identity allows us to construct gradient/Hessian estimators for a general class of   smooth functions, and enables the optimal convergence rate when the underlying function is ``thrice continuously differentiable with Lipschitz-continuous third-order derivatives.''
	The following proposition describes the basic Stein's identity.   We mention that   \citet{erdogdu2015newton} used Stein's identity to estimate Hessian too,   their work is restricted for \emph{linear} predictor with Gaussian data. }

	\begin{prop}
		[First-Order and Second-Order Stein's Identity] \label{prop:Stein}  \citep{stein2004use} Let $\bX \in\real^{\Dimension}$ be a $\Dimension$-dimensional  random vector whose underlying  density function  is $\funDensity\ilparenthesis{\cdot}:\real^{\Dimension}\mapsto\real$.

		\begin{enumerate}[i)]
			\item Assume that the density function $ \funDensity\ilparenthesis{\bx} $ is   \emph{differentiable}.  Let $\funTest:\real^{\Dimension}\mapsto\real$ be a differentiable  function such that $ \E\ilbracket{\nabla \funTest\ilparenthesis{\bX}} $ \emph{exists}. Then 
			\begin{equation}\label{eq:Stein1}
			\E\set{  \funTest(\bX ) \ilbracket{\funDensity\ilparenthesis{\bX}}^{-1 }    \nabla \funDensity\ilparenthesis{\bX } } = -\E\ilbracket{\nabla \funTest\ilparenthesis{\bX }}\,. 
			\end{equation}
			\item Assume that the density function $\funDensity\ilparenthesis{\bx}$ is \emph{twice differentiable}. Let $\funTest:\real^{\Dimension}\mapsto\real$ be a twice differentiable function such that $ \E\ilbracket{\nabla^2 \funTest\ilparenthesis{\bX}} $ \emph{exists}. Then \begin{equation}\label{eq:Stein2}
			\E\set{  \funTest(\bX ) \ilbracket{\funDensity\ilparenthesis{\bX}}^{-1}    \nabla^2 \funDensity \ilparenthesis{\bX } } =  \E\ilbracket{\nabla^2 \funTest\ilparenthesis{\bX }}\,. 
			\end{equation}
		\end{enumerate}
		
	\end{prop}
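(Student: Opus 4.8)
The plan is to derive both identities from integration by parts (equivalently, the divergence theorem) applied one coordinate at a time, after rewriting the expectations as Lebesgue integrals over $\real^{\Dimension}$. For part i), since $\bX$ has density $\funDensity$ and the event $\set{\funDensity\ilparenthesis{\bX}=0}$ carries no mass, the $j$-th component of the left side of (\ref{eq:Stein1}) equals $\int_{\real^{\Dimension}} \funTest\ilparenthesis{\bx}\,\partial_j\funDensity\ilparenthesis{\bx}\,\diff\bx$ — equivalently $\E\ilbracket{\funTest\ilparenthesis{\bX}\,\partial_j\log\funDensity\ilparenthesis{\bX}}$, the score-function form, after canceling the $\funDensity$ from the density against the $\funDensity^{-1}$ in the integrand. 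I would then invoke Fubini to isolate the $x_j$-integral and integrate by parts in $x_j$, which yields $-\int_{\real^{\Dimension}}\partial_j\funTest\ilparenthesis{\bx}\,\funDensity\ilparenthesis{\bx}\,\diff\bx=-\E\ilbracket{\partial_j\funTest\ilparenthesis{\bX}}$ plus a boundary term. Assembling the $\Dimension$ coordinates into a vector gives (\ref{eq:Stein1}).

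For part ii) the same mechanism is applied twice. For the $(j,k)$ entry one writes $\int\funTest\,\partial^2_{jk}\funDensity=-\int\partial_j\funTest\,\partial_k\funDensity=\int\partial^2_{jk}\funTest\,\funDensity$, where the first equality is integration by parts in $x_j$ and the second in $x_k$, each up to a boundary term. The two sign flips cancel, producing $+\E\ilbracket{\nabla^2\funTest\ilparenthesis{\bX}}$; here the twice-differentiability of $\funDensity$ assumed in the hypothesis is exactly what allows the second integration by parts to go through.

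The main obstacle — in fact essentially the only nontrivial point — is verifying that the boundary terms vanish. I would handle this by exhaustion: carry out the integration by parts on the box $\ilbracket{-R,R}^{\Dimension}$, so that each boundary term is a genuine surface integral over $\partial\ilbracket{-R,R}^{\Dimension}$ (of $\funTest\funDensity$ in part i), and of $\partial_j\funTest\,\funDensity$ and $\funTest\,\partial_k\funDensity$ in part ii)), and then let $R\to\infty$ along a suitable sequence $R_n$. The hypothesis that $\E\ilbracket{\nabla\funTest\ilparenthesis{\bX}}$ (resp. $\E\ilbracket{\nabla^2\funTest\ilparenthesis{\bX}}$) exists makes $\partial_j\funTest\cdot\funDensity$ (resp. the relevant second-order products) integrable over all of $\real^{\Dimension}$, and since $\funDensity$ is a probability density ($\int\funDensity=1<\infty$), a standard measure-theoretic argument — Fubini together with the fundamental theorem of calculus along lines, or dominated convergence applied to the truncated integrals — forces the surface integrals to $0$ along some $R_n\to\infty$. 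This decay-at-infinity requirement is precisely the implicit regularity under which Stein's identity is stated in \citet{stein2004use}, so citing that reference is legitimate; in writing up I would simply flag that (\ref{eq:Stein1}) and (\ref{eq:Stein2}) are to be read together with the integrability conditions that make the boundary contributions vanish.
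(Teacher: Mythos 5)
The paper offers no proof of Proposition~\ref{prop:Stein}: it is imported as a cited classical result from \citet{stein2004use}, so there is no in-paper argument to compare yours against. Your coordinate-wise integration-by-parts derivation is the standard route to both identities, and its skeleton is right: cancel $\ilbracket{\funDensity\ilparenthesis{\bX}}^{-1}$ against the density to reduce the left sides to $\int \funTest\,\partial_j \funDensity\,\diff\bx$ and $\int \funTest\,\partial_j\partial_k \funDensity\,\diff\bx$, then integrate by parts once for (\ref{eq:Stein1}) and twice for (\ref{eq:Stein2}), with the two sign flips cancelling in the second case.

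The one genuine soft spot is the boundary-term step, and you have it backwards: the stated hypotheses do \emph{not} force the surface integrals to vanish, even along a subsequence $R_n\to\infty$. What integrability of $\partial_j\funTest\cdot\funDensity$ and of $\funTest\,\partial_j\funDensity$ buys you is that $\partial_j\ilparenthesis{\funTest\funDensity}$ is integrable, hence $\funTest\funDensity$ has limits at $\pm\infty$ along almost every line parallel to the $x_j$-axis --- but nothing forces those limits to be zero or to cancel (a function can behave like $\arctan$ along a line: its derivative is integrable while the two limits differ, which is exactly a nonzero boundary contribution). The vanishing of $\funTest\funDensity$ (resp.\ $\partial_j\funTest\,\funDensity$ and $\funTest\,\partial_k\funDensity$) at the boundary of $\set{\funDensity>0}$ --- which need not be all of $\real^{\Dimension}$, so ``boundary'' is not only infinity --- is a genuine additional regularity hypothesis, not a consequence of ``a standard measure-theoretic argument.'' You are right to defer to \citet{stein2004use} for it, but present it as an extra assumption under which the identity is stated, not as something you can derive. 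A second, minor correction: in $\int \funTest\,\partial_j\partial_k\funDensity = -\int \partial_j\funTest\,\partial_k\funDensity = \int \partial_j\partial_k\funTest\,\funDensity$, the \emph{first} integration by parts is where the twice-differentiability of $\funDensity$ is consumed (you integrate $\partial_j\partial_k\funDensity$ back to $\partial_k\funDensity$), while the \emph{second} consumes the twice-differentiability of $\funTest$ (you differentiate $\partial_j\funTest$ again); your write-up attributes the second step to $\funDensity$.
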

	 For  the special case of multivariate standard normal vector $\bX \sim\normalDist \ilparenthesis{\zero,\bI}$, we have   $ \nabla \funDensity\ilparenthesis{\bx } = - \bx    \funDensity\ilparenthesis{\bx  } $ and $ \nabla^2 \funDensity\ilparenthesis{\bx} =  \ilparenthesis{\bx\bx^\transpose-\bI }   \funDensity(\bx)$. In this case,    (\ref{eq:Stein1}) and (\ref{eq:Stein2})  reduce to 
	 \begin{equation*}
	 \begin{cases}
	 \E\ilbracket{\bX    \funTest\ilparenthesis{\bX}} = \E\ilbracket{\nabla \funTest\ilparenthesis{\bX }} \,, \\
	  \E \ilbracket{\ilparenthesis{\bX\bX^\transpose-\bI}    \funTest\ilparenthesis{\bX}} = \E\ilbracket{\nabla^2 \funTest\ilparenthesis{\bX }} \,, 
	 \end{cases}  \text{ when }\bX \sim\normalDist\ilparenthesis{\zero,\bI}\,. 
	 \end{equation*}
	  For the  case of $\bX\sim\normalDist\ilparenthesis{\zero,\bSigma}$, we have $ \nabla \funDensity\ilparenthesis{\bx} = - \bSigma^{-1}  \bx   \funDensity\ilparenthesis{\bx} $, $ \nabla^2 \funDensity\ilparenthesis{\bx} = \parenthesis{\bSigma^{-1}\bx\bx^\transpose\bSigma^{-1} - \bSigma^{-1} }    \funDensity(\bx) $. In this case,    (\ref{eq:Stein1}) and (\ref{eq:Stein2})  reduce to 
	   \begin{equation*}
	  \begin{cases}
\E\ilbracket{\bSigma^{-1}\bX   \funTest\ilparenthesis{\bX}} = \E\ilbracket{\nabla \funTest\ilparenthesis{\bX }}  \,, \\
	 \E\bracket{ \parenthesis{\bSigma^{-1} \bX\bX^\transpose\bSigma^{-1}  - \bSigma^{-1}}   \funTest (\bX ) } = \E \bracket{\nabla^2 \funTest\ilparenthesis{\bX }}  \,, 
	  \end{cases}  \text{ when }\bX \sim\normalDist\ilparenthesis{\zero,\bSigma}\,. 
	  \end{equation*}

\begin{rem}\label{rem:distExtensionCont}
			The distribution for the \emph{absolutely continuous} $\Dimension$-dimensional random vector $\bX$  is allowed to come  from  the  exponential family (with continuous density function) per \citet{hudson1978natural}, and the   family of elliptical distributions   (i.e.,  spherical distribution,  hyperbolic distribution, logistic distribution, multivariate Laplace distribution, multivariate $ t $-distribution) per \citet{landsman2008stein}.
	
\end{rem}

\begin{rem}\label{rem:distExtensionDisc}
	\citet{teerapabolarn2013stein} extends (\ref{eq:Stein1}--\ref{eq:Stein2}) to  several \emph{discrete} distributions, including the   binomial (Rademacher)  and  the Poisson distributions.
\end{rem}

	 {Let us introduce the   smoothed loss function $ \loss_{\perturb} \ilparenthesis{\btheta}$ constructed through the convolution  of the underlying loss function $\loss\ilparenthesis{\cdot}$ and the Gaussian kernel. 
		The upcoming gradient/Hessian estimator with perturbation magnitude $\perturb$ will be the \emph{unbiased} estimator for the gradient/Hessian of the \emph{smoothed} loss function $ \loss_{\perturb}\ilparenthesis{\cdot} $. 
		Even though $\loss\ilparenthesis{\cdot}$   may \emph{not} be continuous, $\loss_{\perturb}\ilparenthesis{\cdot}$ is  \emph{infinitely many times differentiable},  inheriting from the  Gaussian kernel function.  }

	 \begin{lem}[Stein's Identity Based Estimator]
	 	\label{lem:Stein} 
	Let $\direction\sim\normalDist\ilparenthesis{\zero,\bI}$. Let   $ \loss_{\perturb}\ilparenthesis{\btheta}\equiv \E _{\direction} \bracket{\loss\ilparenthesis{\btheta + \perturb\direction}} $ be  the \emph{smoothed} loss function. Assume the left-hand-sides of (\ref{eq:SteinGradient}--\ref{eq:SteinHessian3}) \emph{exist}, we have the following estimators. 
	\begin{enumerate}[i)]
		\item One-measurement estimator for $\nabla\loss _\perturb\ilparenthesis{\btheta}$ and $\nabla^2\loss_{\perturb}\ilparenthesis{\btheta}$:	\begin{equation}
		\label{eq:SteinGradient}
		\E _{\direction} \bracket{  \perturb^{-1} \direction {\loss\ilparenthesis{\btheta+\perturb\direction}}              } = \E_{\direction} \bracket{\bg  \ilparenthesis{\btheta+\perturb\direction}} \equiv \nabla\loss_{\perturb}\ilparenthesis{\btheta}    \,,
		\end{equation}
		\begin{equation}
		\label{eq:SteinHessian}  \E _{\direction} \bracket{ \perturb^{-2}\parenthesis{\direction\direction^\transpose-\bI }{\loss\ilparenthesis{\btheta+ \perturb\direction}}       }= 	\E_{\direction}\bracket{\bH\ilparenthesis{\btheta+\perturb\direction}} \equiv  \nabla^2 \loss_{\perturb}\ilparenthesis{\btheta}\,.
		\end{equation}

		\item Two-measurements estimator for $\nabla \loss_\perturb\ilparenthesis{\btheta}$ and $\nabla^2\loss_{\perturb}\ilparenthesis{\btheta}$: \begin{subnumcases}{\label{eq:SteinGradient2}  	\nabla\loss_{\perturb}\ilparenthesis{\btheta}  =}
		\E _{\direction} \set{\perturb^{-1}\ilbracket{\loss\ilparenthesis{\btheta+\perturb\direction}  - \loss\ilparenthesis{\btheta}} \direction   }\,,   \label{eq:SteinGradient2-1} \\
		\E _{\direction} \set{  (2\perturb)^{-1}\ilbracket{ \loss\ilparenthesis{\btheta+\perturb\direction} - \loss\ilparenthesis{\btheta-\perturb\direction} } \direction   } \,,  \label{eq:SteinGradient2-2}
		\end{subnumcases}
		\begin{subnumcases}{\label{eq:SteinHessian2}  		\nabla^2\loss_{\perturb}\ilparenthesis{\btheta} =}
		\E _{\direction} \set{ \perturb^{-2} \ilbracket{\loss\ilparenthesis{\btheta+\perturb\direction} - \loss\ilparenthesis{\btheta}}  \ilparenthesis{\direction\direction^\transpose-\bI }   } \,,   \label{eq:SteinHessian2-1} \\
		\E_{\direction} \set{ (2\perturb^2)^{-1} \ilbracket{\loss\ilparenthesis{\btheta+\perturb\direction} +  \loss\ilparenthesis{\btheta-\perturb\direction}} \ilparenthesis{\direction\direction^\transpose-\bI }   }\,.  \label{eq:SteinHessian2-2}
		\end{subnumcases}
		
		\item Three-measurements estimator for    $\nabla^2\loss_{\perturb}\ilparenthesis{\btheta}$:	\begin{equation}\label{eq:SteinHessian3}
		\nabla^2\loss_{\perturb}\ilparenthesis{\btheta} = \E_{\direction} \set{ (2\perturb^2)^{-1} \ilbracket{\loss\ilparenthesis{\btheta+\perturb\direction} +  \loss\ilparenthesis{\btheta-\perturb\direction} - 2\loss\ilparenthesis{\btheta}}\ilparenthesis{\direction\direction^\transpose-\bI }   }\,. 
		\end{equation}
	\end{enumerate}

\end{lem}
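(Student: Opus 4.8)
The plan is to derive all eight identities from the Gaussian special case of Proposition~\ref{prop:Stein}, together with two elementary facts about $\direction\sim\normalDist(\zero,\bI)$ --- the moment identities $\E_\direction[\direction]=\zero$ and $\E_\direction[\direction\direction^\transpose-\bI]=\zero$ --- and the reflection symmetry of the standard normal, i.e. $-\direction$ is distributed as $\direction$. Only the one-measurement pair (\ref{eq:SteinGradient})--(\ref{eq:SteinHessian}) needs real work; (\ref{eq:SteinGradient2-1})--(\ref{eq:SteinHessian3}) then follow by adding mean-zero control variates and symmetrizing.

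First I would prove (\ref{eq:SteinGradient}) and (\ref{eq:SteinHessian}). Writing $\loss_\perturb$ as the convolution $\loss_\perturb(\btheta)=\int\loss(\bz)\,\funDensity_\perturb(\bz-\btheta)\,\diff\bz$, where $\funDensity_\perturb$ is the $\normalDist(\zero,\perturb^2\bI)$ density (substitute $\bz=\btheta+\perturb\direction$ to see the two forms agree), I would use the standard mollifier differentiation rule $\nabla^j\loss_\perturb=\loss\ast\nabla^j\funDensity_\perturb$, $j=1,2$, together with the explicit expressions $\nabla\funDensity_\perturb(\bx)=-\perturb^{-2}\bx\,\funDensity_\perturb(\bx)$ and $\nabla^2\funDensity_\perturb(\bx)=\perturb^{-2}\bigl(\perturb^{-2}\bx\bx^\transpose-\bI\bigr)\funDensity_\perturb(\bx)$. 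Substituting $\bz=\btheta+\perturb\direction$ inside the convolution integrals turns $\perturb^{-2}(\bz-\btheta)$ into $\perturb^{-1}\direction$ and $\perturb^{-2}\bigl(\perturb^{-2}(\bz-\btheta)(\bz-\btheta)^\transpose-\bI\bigr)$ into $\perturb^{-2}(\direction\direction^\transpose-\bI)$, yielding exactly $\nabla\loss_\perturb(\btheta)=\E_\direction[\perturb^{-1}\direction\,\loss(\btheta+\perturb\direction)]$ and $\nabla^2\loss_\perturb(\btheta)=\E_\direction[\perturb^{-2}(\direction\direction^\transpose-\bI)\loss(\btheta+\perturb\direction)]$. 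When $\loss$ is itself twice differentiable, differentiating instead inside $\E_\direction[\loss(\btheta+\perturb\direction)]$ produces the intermediate expressions $\E_\direction[\bg(\btheta+\perturb\direction)]$ and $\E_\direction[\bH(\btheta+\perturb\direction)]$; equivalently one applies Proposition~\ref{prop:Stein} in the $\normalDist(\zero,\bI)$ case to the test function $\funTest(\bx)=\loss(\btheta+\perturb\bx)$, for which $\nabla\funTest(\bx)=\perturb\,\bg(\btheta+\perturb\bx)$ and $\nabla^2\funTest(\bx)=\perturb^2\bH(\btheta+\perturb\bx)$. If $\loss$ is not differentiable, one simply omits these middle terms: the convolution argument still defines $\loss_\perturb$ as an infinitely differentiable function and delivers the first and last members of (\ref{eq:SteinGradient})--(\ref{eq:SteinHessian}).

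Next I would go from one to two measurements by subtracting terms that are constant in $\direction$ and hence integrate against the control variates to zero. Because $\E_\direction[\perturb^{-1}\direction]=\zero$, replacing $\loss(\btheta+\perturb\direction)$ by $\loss(\btheta+\perturb\direction)-\loss(\btheta)$ in (\ref{eq:SteinGradient}) does not change the expectation, which is (\ref{eq:SteinGradient2-1}); similarly $\E_\direction[\perturb^{-2}(\direction\direction^\transpose-\bI)]=\zero$ gives (\ref{eq:SteinHessian2-1}) from (\ref{eq:SteinHessian}). For the symmetric forms I would split the expectation into the $\loss(\btheta+\perturb\direction)$ and $\loss(\btheta-\perturb\direction)$ pieces and apply the change of variable $\direction\mapsto-\direction$, under which $\direction\direction^\transpose-\bI$ is invariant while $\direction$ flips sign; this yields $\E_\direction[\loss(\btheta-\perturb\direction)\direction]=-\E_\direction[\loss(\btheta+\perturb\direction)\direction]$ and $\E_\direction[\loss(\btheta-\perturb\direction)(\direction\direction^\transpose-\bI)]=\E_\direction[\loss(\btheta+\perturb\direction)(\direction\direction^\transpose-\bI)]$, and combining with the one-measurement identities gives (\ref{eq:SteinGradient2-2}) and (\ref{eq:SteinHessian2-2}). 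Finally, (\ref{eq:SteinHessian3}) follows by adding the vanishing term $-2\loss(\btheta)(2\perturb^2)^{-1}\E_\direction[\direction\direction^\transpose-\bI]=\zero$ to (\ref{eq:SteinHessian2-2}).

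The only delicate point is the passage of derivatives inside the integral that underlies (\ref{eq:SteinGradient})--(\ref{eq:SteinHessian}); the hard part is checking that $\loss(\bz)\,\nabla\funDensity_\perturb(\bz-\btheta)$ and $\loss(\bz)\,\nabla^2\funDensity_\perturb(\bz-\btheta)$ admit integrable dominating functions locally uniformly in $\btheta$. This holds because each entry of $\nabla\funDensity_\perturb$ and $\nabla^2\funDensity_\perturb$ is a polynomial in its argument times $\funDensity_\perturb$, and for $\btheta$ ranging over a bounded set the Gaussian factor $\funDensity_\perturb(\bz-\btheta)$ is bounded by a constant times $\funDensity_{\perturb'}(\bz)$ for any $\perturb'>\perturb$, whose super-polynomial decay absorbs the polynomial factor; the assumed existence of the left-hand sides of (\ref{eq:SteinGradient})--(\ref{eq:SteinHessian3}) then guarantees the dominating functions are integrable against $\lvert\loss\rvert$. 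Everything downstream is bookkeeping with the first two moments of the standard normal and the reflection $\direction\mapsto-\direction$.
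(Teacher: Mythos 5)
Your proposal is correct and follows essentially the same route as the paper: differentiate the Gaussian convolution under the integral (Leibniz/mollifier rule, equivalently Proposition~\ref{prop:Stein} applied to $\funTest(\bx)=\loss(\btheta+\perturb\bx)$) to get the one-measurement identities, then obtain the multi-measurement forms by adding the mean-zero terms $\E[\direction]=\zero$, $\E[\direction\direction^\transpose-\bI]=\zero$ and using the reflection $\direction\mapsto-\direction$, which is exactly the paper's identity $\nabla\loss_{\perturb}(\btheta)=-\E[\perturb^{-1}\direction\,\loss(\btheta-\perturb\direction)]$. Your explicit domination argument for passing derivatives inside the integral is a detail the paper leaves implicit, not a different approach.
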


 	Although Lemma~\ref{lem:Stein}  does  \emph{not} provide an unbiased gradient/Hessian estimators for the true  loss function $\loss\ilparenthesis{\cdot}$, they are instrumental to construct an  \emph{asymptotically unbiased} estimators in (\ref{eq:ghat}--\ref{eq:Hhat}) with its bias vanishing at certain rate to be discussed in Lemmas~\ref{lem:BiasGradient}--\ref{lem:BiasHessian}.

	\subsection{Algorithmic Form}\label{sect:algo}
Recall that the model-trust issue discussed in 	Sect.~\ref{sect:damping}   can be handled by  the \emph{damping} technique   per Remark~\ref{rem:damping}. Therefore, we modify (\ref{eq:SA2nd}) as follows:    	
		\begin{subnumcases}{\label{eq:Newton}  	 }
 \hbtheta_{k+1} = \hbtheta_k-\gain_k \ilbracket{\mappingPD_k\ilparenthesis{\obH_k}}^{-1} \hbg_k \,, \label{eq:Newton-1} 
\\
 	\obH_{k+1} =  \obH_k - \gainH_k \ilparenthesis{ \obH_k - \hbH_{k}  }   \,,
  \label{eq:Newton-2}
	\end{subnumcases} for $k\ge 0$.  
The general adaptive SA algorithm 	(\ref{eq:Newton}) is comprised of two recursions: (\ref{eq:Newton-1}) estimates  $\btheta$ via   a stochastic analogue of the Newton method, and  (\ref{eq:Newton-2}) produces estimate for  $\bH \ilparenthesis{\cdot}$ through  a weighted average of the seen Hessian estimates $\hbH_k$'s. 
The stepsize $\gainH_k$ governing the smoothing rate for Hessian estimate places a crucial role in the convergence of (\ref{eq:Newton}),   and one  common choice\footnote{This is the stepsize enforced in \algoName{2SPSA}. Here we allow  $\gainH_k$ to go to zero at a rate such that $ \sum_k \gainH_k^2\perturb_k^{-4} <\infty $, see A.\ref{assume:Stepsize} to appear. } is $ \gainH_k  = \nicefrac{1}{\ilparenthesis{k+1}} $ for $k\ge 0$. When $\gainH_0>0$, the initialization $\obH_0$ can be  a scaling  matrix (identity multiplied by a scalar), so that early iteration of (\ref{eq:Newton}) resembles that of (\ref{eq:SA1st}).  
	 In (\ref{eq:Newton}),   $\mappingPD_k\ilparenthesis{\cdot}$ is a mapping  from $\real^{\Dimension\times\Dimension}$ to the set of symmetric and positive definite matrices. 
	The straightforward form for $\mappingPD_k$ is the approximate damped Newton (see Remark~\ref{rem:damping}): $
	\mappingPD_k\ilparenthesis{\obH_k} = \obH_k + \correction_k \bI$,  for $ \correction_k> \uplambda_{\min}\ilparenthesis{\obH_k}$. 	Another popular mapping is $ \mappingPD_k\ilparenthesis{\obH_k} = \ilparenthesis{\obH_k ^\transpose\obH_k + \correction_k \bI}^{\nicefrac{1}{2}} $ for $\correction_k \to 0$, where the square root here is the unique positive definite square root (implementable via \texttt{sqrtm} in \textsc{MatLab}). 
 
 For succinctness, write  $\lossNoisy_k^\pm \equiv  \lossNoisy\ilparenthesis{\hbtheta_k\pm \perturb_k \direction_k  , \upomega_k^\pm }   $ and $ \lossNoisy_k \equiv  \lossNoisy\ilparenthesis{\hbtheta_k,\upomega_k } $. Write $ \lossNoise_{k}^{\pm} \equiv \lossNoisy_k^\pm - \loss\ilparenthesis{\hbtheta_k\pm \perturb_k\direction_k } $ and $ \lossNoise _k \equiv    \lossNoisy \ilparenthesis{\hbtheta_k  ,\upomega_{k}  } - \loss\ilparenthesis{\hbtheta_k  }   $.   
Since only noisy evaluation of the loss function is accessible, the gradient estimate  can be as  (\ref{eq:ghat}) using one or two noisy ZO oracles
\begin{subnumcases}{\label{eq:ghat}  	\hbg_k  =}
 \perturb_k^{-1} \lossNoisy_k^+    \direction_k   \,,   \label{eq:ghat-1} \\
  \perturb_k^{-1} \ilparenthesis{ \lossNoisy_k^+ - \lossNoisy_k  }   \direction_k   \,,   \label{eq:ghat-2} \\
 \ilparenthesis{2  \perturb_k}^{-1} \ilparenthesis{ \lossNoisy_k^+ - \lossNoisy_k^- }   \direction_k \,, \label{eq:ghat-3}
\end{subnumcases} 
and the Hessian estimate can be as (\ref{eq:Hhat}) using one to three noisy ZO oracles 
\begin{subnumcases}{\label{eq:Hhat}  	\hbH_k   =} 
\perturb_k^{-2}  {\lossNoisy_k^+ }    \ilparenthesis{\direction_k\direction_k^\transpose-\bI}   \,,   \label{eq:Hhat-0} \\
\perturb_k^{-2} \ilparenthesis{\lossNoisy_k^+ - \lossNoisy_k  }    \ilparenthesis{\direction_k\direction_k^\transpose-\bI}   \,,   \label{eq:Hhat-1} \\
\ilparenthesis{2  \perturb_k^2}^{-1} \ilparenthesis{ \lossNoisy_k^+ + \lossNoisy_k^-  }     \ilparenthesis{\direction_k\direction_k^\transpose-\bI}   \,, \label{eq:Hhat-2} \\
\ilparenthesis{2  \perturb_k^2}^{-1} \ilparenthesis{  \lossNoisy_k^+ + \lossNoisy_k^- - 2 \lossNoisy_k  }     \ilparenthesis{\direction_k\direction_k^\transpose-\bI}   \,. \quad \quad \label{eq:Hhat-3} 
\end{subnumcases}  The Hessian estimators (\ref{eq:Hhat}) are  guaranteed to be symmetric.  {This contrasts with \algoName{2SPSA}  \citet{spall2000adaptive}  where manual symmetrization is required.}

We focus on the uncontrolled\footnote{For the controllable noise scenario where $\upomega_k^{\pm } = \upomega_k$, frequently encountered in simulation optimization or the fixed-dataset discussed in Sect.~\ref{sect:data},  $ O(k^{-\nicefrac{1}{2}}) $ in terms of   RMS.   } noise scenario where $\upomega_k^\pm, \upomega_k$ are i.i.d., frequently encountered in datastream discussed in Sect.~\ref{sect:data}, $ O(k^{-\nicefrac{1}{3}}) $ in terms of root mean squared (RMS) error $ \ilparenthesis{\E\ilbracket{\norm{\hbtheta_k-\bvartheta}^2}}^{\nicefrac{1}{2}} $ is the \emph{fastest} rate possible for $\gain_k = O(k^{-1})$ and $ \perturb_k = O(k^{-\nicefrac{1}{6}}) $,  when $\loss\ilparenthesis{\cdot}$ is thrice continuously differentiable and is not quadratic.

\subsection{Implementation Aspects}

	The entire second-order algorithms (\ref{eq:Newton}) is summarized in 
	Algorithm~\ref{algo:Newton}.  
	
	  \begin{algorithm}[!htbp]
	 	\caption{Zeroth-Order Damped Hessian Algorithm Using Normal Perturbation } 
	 	\begin{algorithmic}[1]  
	 		\renewcommand{\algorithmicrequire}{\textbf{Input:}}
	 		\renewcommand{\algorithmicensure}{\textbf{Output:}}
	 		\Require  initialization $\hbtheta_0$, $\obH_0=\bI$; coefficients $ \gain_k, \gainH_k$ and $\perturb_k$ for $ 0\le k\le \iterTotal $. 
	 		\State \textbf{set} iteration counter $k=0$.  
	 		\State 	\For{$k=0,1,\cdots, \iterTotal$}{
	 			\textbf{generate} $\direction_k \sim\normalDist \ilparenthesis{\zero,\bI} $ and 
	 			\textbf{collect}   noisy loss  observations. 
	 			\newline   \;
	 			\;    \indent 
	 			\textbf{estimate} $\hbg_k$ via (\ref{eq:ghat}) and $ \hbH_k  $   via (\ref{eq:Hhat}). 
	 				\newline   \;
	 			\;    \indent 
	 			\textbf{update} $\hbtheta_{k+1}$ and $ \obH_{k+1} $ via (\ref{eq:Newton}).  
	 		}     
	 		\Ensure terminal estimate $ \hbtheta_{\iterTotal+1} $ (or iterate average). 
	 	\end{algorithmic}
	 	\label{algo:Newton}
	 \end{algorithm}

	\paragraph{$\hbtheta_0$ initialization}:  Following the last paragraph in Sect.~\ref{sect:damping}, 
	it is recommended to first run  the first-order scheme (\ref{eq:SA1st}),  and then switch to the second-order scheme (\ref{eq:SA2nd}) after $\hbtheta_k$ has reached the vicinity of $\bvartheta$ where the loss function is \emph{locally quadratic}.

	\paragraph{Positive-definite mapping}:
	The crucial step before applying second-order estimation to the parameter update for $\hbtheta_k$ within an optimization context is enforcing the Hessian estimate to be positive-definite.  Note that the estimation errors in $\hbH_k$ \emph{may} result in negative eigenvalues of $\obH_k$, which  is inevitable due to the noisy ZO oracles. 
	Generally, it costs $O(\Dimension^3)$ FLOPs  to guarantee a  symmetric matrix to be positive-definite \citep{zhu2002modified}.  \citep{zhu2019efficient} proposed to utilize the symmetric indefinite matrix decomposition to reduce the per-iteration FLOPs to $O(\Dimension^2)$.

  \paragraph{Gradient/Hessian averaging}: It is desirable to average several $\hbH_k$ and $\hbg_k $ values despite the additional query cost, especially in \emph{high-noise} environment.

		  \paragraph{Blocking}: We may enforce to set $\hbtheta_{k+1} = \hbtheta_k$ if the evaluation of the noisy evaluation at $\hbtheta_{k+1}$ is  substantially higher than that at $\hbtheta_k$ by a user-specified constant.

	\subsection{  Notation Conventions}\label{subsect:Notation}
	
	\paragraph{Matrix and vector operations} Let $\bA\in\real^{\Dimension\times\Dimension}$ be a matrix and $\bx\in\real^{\Dimension}$ be a vector. $ \norm{\bx} $ returns the Euclidean norm of $\bx$, and $\norm{\bA}$ returns the spectral norm of $\bA$.  If all eigenvalues of $\bA$ are real, $ \uplambda_{\min}\ilparenthesis{\bA} $ returns its smallest eigenvalue.  $ \bA\succ\zero (\bA\succeq\zero) $ means $\bA$ is symmetric and positive definite (semi-definite). 
	
	\paragraph{Probability} Let $\field_k$  represents the history\footnote{The precise definition for the filtration $\field_k$ may vary, depending on the estimator forms (\ref{eq:ghat}--\ref{eq:Hhat}) and the corresponding ZO queries. } of the process (\ref{eq:SA}). If $\hbtheta_0$ is random,   $\field_0$  is spanned by $\hbtheta_0$; if otherwise, $\field_0$ is trivial. 
	  Note that  $\hbtheta_{k+1}$ is a random variable that depends on the filtration $\field_k$ generated by the recursive algorithm (\ref{eq:SA})  before $\hbtheta_{k+1}$ is realized. Let $ \E_k\ilparenthesis{\cdot} $ denote the conditional expectation $ \E\ilbracket{\given{\cdot}{\field_k}} $.  ``Independent and identically distributed" is abbreviated as i.i.d. ``Infinitely often'' is abbreviated as i.o. ``Almost surely'' is abbreviated as a.s. The equality of two random variables almost surely \emph{may} be written as equality for clarity.

	\paragraph{Miscellaneous}  The binary operator $\otimes$ denotes the Kronecker product. 
	In addition to    $ \bg\ilparenthesis{\btheta}\equiv \nabla \loss\ilparenthesis{\btheta} \in\real^{\Dimension\times 1}$ and $ \bH\ilparenthesis{\btheta}\equiv \nabla^2\loss\ilparenthesis{\btheta} \in\real^{\Dimension\times\Dimension} $, we also  let $ \nabla^3\loss\ilparenthesis{\btheta} \in\real^{1\times \Dimension^3} $ (as a \emph{row} vector) be the third-order derivative of the loss function $\loss\ilparenthesis{\cdot}$.

	\section{Convergence Theory }\label{sect:Theory}

	For clarity and for  Remarks~\ref{rem:gAccuracy}--\ref{rem:HAccuracy} (to appear), we will analyze  $\hbg_k$ in  (\ref{eq:ghat-3}) (using two ZO queries)  and $\hbH_k$ as (\ref{eq:Hhat-3})  (using three ZO queries) subsequently.

	\subsection{Model Assumptions}

	We present conditions under which $\hbtheta_k\to\bvartheta$ and $\obH_k\to\bH\ilparenthesis{\bvartheta}$ almost surely as $k\to\infty$.  
	Write  the ``conditioned'' gradient estimate as  $\obg_k  \equiv    \ilbracket{ \mappingPD_k\ilparenthesis{\obH_k}}^{-1} \bg\ilparenthesis{\hbtheta_k} $.

	\begin{assumeA}[Conditions on $\loss\ilparenthesis{\cdot}$] \label{assume:loss3Lips}
		The loss function $\loss\ilparenthesis{\cdot}:\real^{\Dimension}\mapsto\real$ is thrice continuous differentiable, and $\nabla^3\loss\ilparenthesis{\cdot}$ is $\LipsPara_3$-Lipschitz continuous\footnote{It means that $ \norm{\nabla^3\loss\ilparenthesis{\btheta} - \nabla^3\loss\ilparenthesis{\bzeta}} \le \LipsPara_3 \norm{\btheta-\bzeta} $, where the third-order derivative $ \nabla^3\loss\ilparenthesis{\cdot} $ and the Euclidean norm $\norm{\cdot}$ were defined in Subsection~\ref{subsect:Notation}.  } and bounded.

	\end{assumeA}

\remove{ 
	\begin{assumeAprime}{1'}[Conditions on $\loss\ilparenthesis{\cdot}$] \label{assume:loss4Bound}
 	The loss function $\loss\ilparenthesis{\cdot}:\real^{\Dimension}\mapsto\real$ is four-times continuous differentiable, and $\nabla^4\loss\ilparenthesis{\cdot}$ is bounded for all $\btheta$. 
\end{assumeAprime}
}

	\begin{assumeA}
		[Noisy ZO Queries] \label{assume:Noise} For  the gradient-estimator (\ref{eq:ghat-3}), assume     $ \E\ilbracket{\given{ \lossNoise_k^+-\lossNoise_k^-  }{\hbtheta_k, \direction_k}} = 0 $  and 	$ \E\ilbracket{\given{\ilparenthesis{\lossNoise_k^+ - \lossNoise_k^-}^2}{\hbtheta_k,\direction_k}} $ is uniformly bounded for all $k$.  	For the  Hessian-estimator (\ref{eq:Hhat-3}),  assume 
		  $ \E\ilbracket{\given{ \lossNoise_k^+ + \lossNoise_k^- - 2\lossNoise_k }{\hbtheta_k, \direction_k}} = 0 $ and $ \E\ilbracket{\given{\ilparenthesis{\lossNoise_k^+ + \lossNoise_k^- - 2 \lossNoise_k}^2}{\hbtheta_k,\direction_k}} $  is uniformly bounded for all $k$.  
	\end{assumeA}

	\begin{assumeA}[Random Perturbation] \label{assume:Perturbation}
		$ \direction_k\stackrel{\mathrm{i.i.d.}}{\sim} \normalDist\ilparenthesis{\zero,\bI} $ and $\direction_k$ is independent of $\field_k$. Moreover,  both $ \E\ilset{\ilbracket{ \loss\ilparenthesis{\hbtheta_k\pm\perturb_k\direction_k} }^2}$ and  $ \E\ilset{\ilbracket{ \loss\ilparenthesis{\hbtheta_k } }^2}$  are uniformly bounded. 
	\end{assumeA}

\begin{assumeA}
	[Stepsizes] \label{assume:Stepsize} The positive  stepsizes satisfy $ \gain_k, \gainH_k,  \perturb_k \stackrel{k\to\infty}{\longrightarrow} 0 $, $ \sum_k\gain_k=\infty $,   $ \sum_k \ilparenthesis{\nicefrac{\gain_k}{\perturb_k}}^2<\infty $, and $ \sum_k \parenthesis{\nicefrac{\gainH_k}{\perturb_k^2}}^2<\infty  $. 
\end{assumeA}

\begin{assumeA} \label{assume:UniformlyBounded} 
	$ \limsup_{k\to\infty} \norm{\hbtheta_k} <\infty $  a.s. Moreover,  for any $\uprho>0$ and for all $i\in \set{1,\cdots,\Dimension}$, 
	\begin{equation}\label{eq:Bounded}
 {\text{  $ 	\Prob\parenthesis{E_k\,\, \mathrm{ i.o. }} = 0 \text{ where  the  event }E_k \equiv  
		\set{  \sign \ilparenthesis{ \overline{g}_{k,i} } \neq \sign \ilparenthesis{g_i\ilparenthesis{\hbtheta_k}} \text{ and } |  \ilparenthesis{\hbtheta_k}_i- \ilparenthesis{\bvartheta}_i |   \ge \uprho
		}\,.  $  }}
	\end{equation}
\end{assumeA}

	\begin{assumeA}
		[Sufficient Curvature] \label{assume:Curvature} 	For each $k$ and for all $\btheta$, there exists a $\convexPara>0$ such that $ \ilparenthesis{\hbtheta_k -\bvartheta}^\transpose \obg_k  \ge \convexPara\norm{\hbtheta_k-\bvartheta} $. 
	\end{assumeA}

	\begin{assumeA}
		[Conditions for $\mappingPD_k\ilparenthesis{\cdot}$] \label{assume:PDmapping}  
			  $ \perturb_k^2\ilbracket{ \mappingPD_k\ilparenthesis{\obH_k}}^{-1} \to \zero $ almost surely as $k\to\infty$.
			  For some $\BoundHessianMoment>0$, $ \E\ilbracket{\norm{\ilparenthesis{ \mappingPD_k\ilparenthesis{\hbH_k}}^{-1} }^{2+\BoundHessianMoment} } $ is bounded uniformly for all $k$. 
			  $ \norm{\mappingPD_k\ilparenthesis{\obH_k} - \obH_k} \to 0 $ almost surely as $k\to\infty$.  
	\end{assumeA}

\paragraph{Comments on assumptions} A.\ref{assume:Noise}--A.\ref{assume:Perturbation} are standard in SA algorithms. $ \sum_k\ilparenthesis{\nicefrac{\gainH_k}{\perturb_k^2}}^2<\infty $ in A.\ref{assume:Stepsize} is to ensure the bounded variance of the Hessian estimator.  
\citet[pp. 40--41]{kushner2012stochastic}   
 explains why ``$ \lim\sup_{k\to\infty} \norm{\hbtheta_k}<\infty $ a.s.'' in A.\ref{assume:UniformlyBounded}   is not a stringent  condition and could
 be expected to hold in most applications. (\ref{eq:Bounded})  in A.\ref{assume:UniformlyBounded}  ensures that $\hbtheta_k$ cannot be bouncing around to cause the signs of the conditioned gradient elements to be changing infinitely often when $\hbtheta_k$ is strictly bounded away from $\bvartheta$. 
 A.\ref{assume:loss3Lips} and 
 A.\ref{assume:Curvature} ensure the smoothness\footnote{Note that \algoName{2SPSA} requires four-times continuously differentiable with bounded fourth-order derivatives. We change it to  thrice continuously differentiable with Lipschitz-continuous third-order derivatives. } and steepness of $\loss\ilparenthesis{\cdot}$.   Note that experimenter has full control over 
 the stepsize and the mapping, A.\ref{assume:Stepsize} and A.\ref{assume:PDmapping} can be met easily. 

\subsection{Asymptotic  Unbiasedness}
 {Let us  leverage Lemma~\ref{lem:Stein} to construct an asymptotically unbiased gradient and Hessian estimators. This is instrumental in establishing convergence and achieving optimal convergence rate later on. }
\begin{lem}
	[Bias/Variance in Gradient Estimate] \label{lem:BiasGradient}  Under A.\ref{assume:loss3Lips}--A.\ref{assume:Perturbation}, the gradient estimators (\ref{eq:ghat})  satisfy  $  \E_k\ilparenthesis{\hbg_k }       \stackrel{\mathrm{a.s.}}{=} \bg\ilparenthesis{\hbtheta_k}+ O(\perturb_k^2) $. Furthermore,  $ \E _k\ilparenthesis{ \norm{\hbg_k}^2} \stackrel{\mathrm{a.s.}}{=} O(\perturb_k^{-2}) $.  
\end{lem}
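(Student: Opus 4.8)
The plan is to carry out the analysis for $\hbg_k$ as in (\ref{eq:ghat-3}) (the two-query symmetric difference); the one-sided forms (\ref{eq:ghat-1}) and (\ref{eq:ghat-2}) are handled analogously, using (\ref{eq:SteinGradient}) or (\ref{eq:SteinGradient2-1}) of Lemma~\ref{lem:Stein} in place of (\ref{eq:SteinGradient2-2}). First I would decompose $\lossNoisy_k^\pm = \loss\ilparenthesis{\hbtheta_k\pm\perturb_k\direction_k}+\lossNoise_k^\pm$, so that $ \hbg_k = (2\perturb_k)^{-1}\ilbracket{\loss\ilparenthesis{\hbtheta_k+\perturb_k\direction_k}-\loss\ilparenthesis{\hbtheta_k-\perturb_k\direction_k}}\direction_k + (2\perturb_k)^{-1}\ilparenthesis{\lossNoise_k^+-\lossNoise_k^-}\direction_k $, i.e.\ a ``smoothed-loss'' term plus a ``noise'' term.

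For the bias, I would take $\E_k\ilparenthesis{\cdot}$. Under A.\ref{assume:Perturbation}, $\direction_k$ is independent of $\field_k$ (so $\hbtheta_k$ is frozen) and the relevant expectations are finite; under A.\ref{assume:Noise}, $\E\ilbracket{\given{\lossNoise_k^+-\lossNoise_k^-}{\hbtheta_k,\direction_k}}=0$, which eliminates the noise term. What remains, $\E_{\direction}\set{(2\perturb_k)^{-1}\ilbracket{\loss\ilparenthesis{\hbtheta_k+\perturb_k\direction}-\loss\ilparenthesis{\hbtheta_k-\perturb_k\direction}}\direction}$, is exactly $\nabla\loss_{\perturb_k}\ilparenthesis{\hbtheta_k}$ by (\ref{eq:SteinGradient2-2}). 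It then remains to show $\norm{\nabla\loss_{\perturb}\ilparenthesis{\btheta}-\bg\ilparenthesis{\btheta}}=O(\perturb^2)$ uniformly in $\btheta$. I would write $\nabla\loss_{\perturb}\ilparenthesis{\btheta}=\E_{\direction}\ilbracket{\bg\ilparenthesis{\btheta+\perturb\direction}}$ (licensed by (\ref{eq:SteinGradient})) and Taylor-expand $\bg$ to third order, which A.\ref{assume:loss3Lips} permits with a remainder bounded by $\frac{\LipsPara_3}{6}\perturb^3\norm{\direction}^3$ via the $\LipsPara_3$-Lipschitz continuity of $\nabla^3\loss$: the term $\perturb\,\bH\ilparenthesis{\btheta}\direction$ has zero mean (odd Gaussian moment), the term $\frac{\perturb^2}{2}\,\nabla^3\loss\ilparenthesis{\btheta}\ilparenthesis{\direction\otimes\direction}$ contributes $O(\perturb^2)$ (since $\nabla^3\loss$ is bounded and $\E_{\direction}\ilbracket{\direction\otimes\direction}=\mathrm{vec}\,\bI$), and the remainder is $O(\perturb^3)$ because $\E_{\direction}\norm{\direction}^3<\infty$. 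Hence $\E_k\ilparenthesis{\hbg_k}\stackrel{\mathrm{a.s.}}{=}\bg\ilparenthesis{\hbtheta_k}+O(\perturb_k^2)$.

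For the second moment, I would start from $\norm{\hbg_k}^2=(4\perturb_k^2)^{-1}\ilparenthesis{\lossNoisy_k^+-\lossNoisy_k^-}^2\norm{\direction_k}^2$ and split, via $(a+b)^2\le 2a^2+2b^2$, into a smoothed-loss part and a noise part. For the noise part, conditioning first on $\direction_k$ and invoking the uniform bound on $\E\ilbracket{\given{\ilparenthesis{\lossNoise_k^+-\lossNoise_k^-}^2}{\hbtheta_k,\direction_k}}$ from A.\ref{assume:Noise}, followed by $\E_{\direction}\norm{\direction}^2=\Dimension$, produces a contribution of order $\perturb_k^{-2}$ --- this is the dominant term and the source of the stated rate. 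For the smoothed-loss part, the mean value theorem gives $\abs{\loss\ilparenthesis{\hbtheta_k+\perturb_k\direction_k}-\loss\ilparenthesis{\hbtheta_k-\perturb_k\direction_k}}\le 2\perturb_k\norm{\direction_k}\sup_{\abs{t}\le 1}\norm{\bg\ilparenthesis{\hbtheta_k+t\perturb_k\direction_k}}$, which cancels the $\perturb_k^{-2}$ prefactor and leaves a quantity with finite conditional expectation (higher Gaussian moments of $\direction_k$ together with the at-most-quadratic growth of $\norm{\bg\ilparenthesis{\cdot}}$ implied by A.\ref{assume:loss3Lips}, or the moment bounds of A.\ref{assume:Perturbation}), hence an $O(1)$ contribution. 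Adding the two gives $\E_k\norm{\hbg_k}^2\stackrel{\mathrm{a.s.}}{=}O(\perturb_k^{-2})$.

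I expect the bias half to be essentially mechanical --- Stein's identity does the heavy lifting and the rest is the familiar odd-moment cancellation in a symmetric-difference Taylor expansion. The genuine obstacle is the second-moment estimate: the loss evaluations $\loss\ilparenthesis{\hbtheta_k\pm\perturb_k\direction_k}$ are statistically coupled to $\norm{\direction_k}$, so with only a second-moment hypothesis on $\loss$ one cannot naively factor the expectation. The remedy is to trade the loss difference for a gradient evaluation via the mean value theorem (which is precisely what exposes the cancelling $\perturb_k$), or to apply Cauchy--Schwarz together with the polynomial growth of $\loss$ guaranteed by A.\ref{assume:loss3Lips}. I would also keep in mind that ``$\stackrel{\mathrm{a.s.}}{=}$'' allows the hidden constants to depend on the almost-surely-finite iterate $\hbtheta_k$, which is what makes A.\ref{assume:loss3Lips}--A.\ref{assume:Perturbation} enough without appealing to the later boundedness assumption.
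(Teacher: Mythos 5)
Your proof is correct and its skeleton matches the paper's: condition on $\field_k$, kill the noise term via A.\ref{assume:Noise}, identify the remaining conditional mean with $\nabla\loss_{\perturb_k}\ilparenthesis{\hbtheta_k}$ via (\ref{eq:SteinGradient2-2}), and then bound the smoothing bias by $O(\perturb_k^2)$. The one step where you genuinely diverge is that last bound: the paper stays on the zeroth-order side, Taylor-expanding $\loss\ilparenthesis{\btheta\pm\perturb\direction}$ and exploiting $\E_{\direction}\ilbracket{\direction\direction^\transpose\bg\ilparenthesis{\btheta}}=\bg\ilparenthesis{\btheta}$ together with the cancellation of the even-order terms in the symmetric difference (see (\ref{eq:BiasGradient-5})), whereas you pass to the gradient side via $\nabla\loss_{\perturb}\ilparenthesis{\btheta}=\E_{\direction}\ilbracket{\bg\ilparenthesis{\btheta+\perturb\direction}}$ (the right-hand equality in (\ref{eq:SteinGradient})) and Taylor-expand $\bg$ directly. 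Both routes consume exactly A.\ref{assume:loss3Lips} and yield the same $O(\perturb^2)$ rate; yours is arguably cleaner since the odd-moment cancellation $\E_{\direction}\ilbracket{\bH\ilparenthesis{\btheta}\direction}=\zero$ is immediate and no $\direction$-multiplied remainder needs tracking, while the paper's version has the advantage of applying almost verbatim to the one-measurement estimator (\ref{eq:ghat-1}), where the extra $\loss\ilparenthesis{\btheta}\direction$ term has no natural gradient-side representation. On the second moment you are in fact more careful than the paper, which only argues verbally that the $O(\perturb_k^{-1})$ noise factor dominates; your $(a+b)^2\le 2a^2+2b^2$ split, conditioning on $\direction_k$ before invoking A.\ref{assume:Noise}, and the mean-value-theorem cancellation of $\perturb_k^{-2}$ in the signal part supply exactly the step the paper leaves implicit, namely the decoupling of $\loss\ilparenthesis{\hbtheta_k\pm\perturb_k\direction_k}$ from $\norm{\direction_k}^2$, which the marginal second-moment bound in A.\ref{assume:Perturbation} does not achieve by itself.
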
 
\begin{rem}\label{rem:gAccuracy}
Despite Lemma~\ref{lem:BiasGradient} applies to all three estimators in (\ref{eq:ghat}), it is recommended to use (\ref{eq:ghat-3}) over (\ref{eq:ghat-1}--\ref{eq:ghat-2}). The rationale behind it is discussed in Appendix~\ref{appendix}\remove{, where  the conditions for (\ref{eq:BiasGradient-5}) is much less stringent than those for  (\ref{eq:BiasGradient-6}--\ref{eq:BiasGradient-7})}. 
\end{rem}
	
	\begin{lem}[Bias/Variance in Hessian Estimate]
	\label{lem:BiasHessian}	Under A.\ref{assume:loss3Lips}--A.\ref{assume:Perturbation}, the Hessian estimator (\ref{eq:Hhat})  satisfy $  \E_k\ilparenthesis{\hbH_k }       \stackrel{\mathrm{a.s.}}{=} \bH\ilparenthesis{\hbtheta_k}+ O(\perturb_k^2) $ and  $\E_k\ilparenthesis{\norm{\hbH_k}^2}\stackrel{\mathrm{a.s.}}{=}  O(\perturb_k^{-4})$. 
	\end{lem}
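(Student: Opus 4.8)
The plan is to prove both assertions for the three-measurement estimator (\ref{eq:Hhat-3}), the one singled out at the start of Section~\ref{sect:Theory}; the estimators (\ref{eq:Hhat-0})--(\ref{eq:Hhat-2}) go through in exactly the same way, using parts (i)--(ii) of Lemma~\ref{lem:Stein} in place of part (iii) and, for the variance, the uniform second-moment bounds on $\loss\ilparenthesis{\hbtheta_k}$ and $\loss\ilparenthesis{\hbtheta_k\pm\perturb_k\direction_k}$ from A.\ref{assume:Perturbation}. First I would substitute $\lossNoisy_k^\pm=\loss\ilparenthesis{\hbtheta_k\pm\perturb_k\direction_k}+\lossNoise_k^\pm$ and $\lossNoisy_k=\loss\ilparenthesis{\hbtheta_k}+\lossNoise_k$ into (\ref{eq:Hhat-3}), so that $\hbH_k$ splits into a true-loss part and a noise part, each multiplied by $\ilparenthesis{\direction_k\direction_k^\transpose-\bI}$. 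Conditioning on the $\sigma$-algebra generated by $\field_k$ and $\direction_k$ (legitimate because $\direction_k$ is independent of $\field_k$ by A.\ref{assume:Perturbation}) and using $\E\ilbracket{\given{\lossNoise_k^++\lossNoise_k^--2\lossNoise_k}{\hbtheta_k,\direction_k}}=0$ from A.\ref{assume:Noise}, the noise part contributes nothing to $\E_k\ilparenthesis{\hbH_k}$. What remains is the expectation over $\direction_k$ alone of $\ilparenthesis{2\perturb_k^2}^{-1}\ilbracket{\loss\ilparenthesis{\hbtheta_k+\perturb_k\direction_k}+\loss\ilparenthesis{\hbtheta_k-\perturb_k\direction_k}-2\loss\ilparenthesis{\hbtheta_k}}\ilparenthesis{\direction_k\direction_k^\transpose-\bI}$, which is precisely the right-hand side of the second-order Stein identity (\ref{eq:SteinHessian3}) evaluated at $\btheta=\hbtheta_k$ (well defined under A.\ref{assume:loss3Lips} and A.\ref{assume:Perturbation}). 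Hence $\E_k\ilparenthesis{\hbH_k}\stackrel{\mathrm{a.s.}}{=}\nabla^2\loss_{\perturb_k}\ilparenthesis{\hbtheta_k}$, and by (\ref{eq:SteinHessian}) this equals $\E_{\direction_k}\ilbracket{\bH\ilparenthesis{\hbtheta_k+\perturb_k\direction_k}}$.

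The bias bound then follows from a single Taylor step. Treating $\bH\ilparenthesis{\cdot}$ as a matrix-valued map whose derivative is $\nabla^3\loss\ilparenthesis{\cdot}$, the Lipschitz continuity in A.\ref{assume:loss3Lips} gives $\bH\ilparenthesis{\hbtheta_k+\perturb_k\direction_k}=\bH\ilparenthesis{\hbtheta_k}+\perturb_k\,\bL_k+\bm{r}_k$, where $\bL_k$ is a $\Dimension\times\Dimension$ matrix that is linear in $\direction_k$ (built from $\nabla^3\loss\ilparenthesis{\hbtheta_k}$) and $\norm{\bm{r}_k}\le\frac{\LipsPara_3}{2}\perturb_k^2\norm{\direction_k}^2$. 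Since $\direction_k\sim\normalDist\ilparenthesis{\zero,\bI}$ has $\E\ilbracket{\direction_k}=\zero$, the first-order term vanishes in expectation, while $\norm{\E_{\direction_k}\ilbracket{\bm{r}_k}}\le\frac{\LipsPara_3\Dimension}{2}\perturb_k^2$; therefore $\E_k\ilparenthesis{\hbH_k}\stackrel{\mathrm{a.s.}}{=}\bH\ilparenthesis{\hbtheta_k}+O(\perturb_k^2)$. Note that the symmetry of the Gaussian is what promotes the naive $O(\perturb_k)$ to $O(\perturb_k^2)$ — the same reason the scheme perturbs antithetically by $\pm\perturb_k\direction_k$.

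For the second moment, I would start from $\norm{\hbH_k}\le\ilparenthesis{2\perturb_k^2}^{-1}\abs{\lossNoisy_k^++\lossNoisy_k^--2\lossNoisy_k}\ilparenthesis{\norm{\direction_k}^2+1}$, using $\norm{\direction_k\direction_k^\transpose-\bI}\le\norm{\direction_k}^2+1$, and split the central difference once more into its true-loss and noise parts. By Taylor's theorem with the second-order remainder controlled by the bounded third derivative (A.\ref{assume:loss3Lips}), the true-loss part is $O\ilparenthesis{\perturb_k^2\norm{\direction_k}^2}$ with an $\omega$-dependent constant (finite a.s., involving $\norm{\bH\ilparenthesis{\hbtheta_k}}$); after dividing by $2\perturb_k^2$, squaring, and taking $\E_{\direction_k}$ of polynomial functions of $\norm{\direction_k}$, this part contributes only $O(1)$. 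For the noise part, conditioning on $\sigma\ilparenthesis{\field_k,\direction_k}$ and using the uniform bound on $\E\ilbracket{\given{\ilparenthesis{\lossNoise_k^++\lossNoise_k^--2\lossNoise_k}^2}{\hbtheta_k,\direction_k}}$ from A.\ref{assume:Noise} together with $\E_{\direction_k}\ilbracket{\ilparenthesis{\norm{\direction_k}^2+1}^2}<\infty$ yields a contribution of $O(\perturb_k^{-4})$. Combining the two via $\ilparenthesis{a+b}^2\le 2a^2+2b^2$ gives $\E_k\ilparenthesis{\norm{\hbH_k}^2}\stackrel{\mathrm{a.s.}}{=}O(\perturb_k^{-4})$.

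The only genuinely delicate point I anticipate is the first one: arguing rigorously that the noise term has zero conditional mean with respect to the $\sigma$-algebra actually generated by the recursion (the filtration $\field_k$ is only pinned down once the estimator form is fixed, as flagged in the footnote in Section~\ref{subsect:Notation}), and then recognizing that the leftover $\direction_k$-expectation is literally the Stein identity (\ref{eq:SteinHessian3}) — this identification is exactly what spares us any fourth-order differentiability assumption on $\loss\ilparenthesis{\cdot}$. The remaining manipulations are routine moment bookkeeping for Gaussian quadratic and quartic forms.
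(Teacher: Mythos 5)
Your proof is correct, and its first step (kill the noise via A.\ref{assume:Noise}, then recognize the surviving $\direction_k$-expectation as the Stein identity so that $\E_k\ilparenthesis{\hbH_k}\stackrel{\mathrm{a.s.}}{=}\nabla^2\loss_{\perturb_k}\ilparenthesis{\hbtheta_k}$) and your variance argument (noise term of order $\perturb_k^{-2}$ dominates, true-loss term contributes only $O(1)$ after the central second difference cancels to $O(\perturb_k^2\norm{\direction_k}^2)$) match the paper's proof. Where you genuinely diverge is the bias bound. The paper stays on the left-hand side of (\ref{eq:SteinHessian3}): it Taylor-expands the scalar loss to third order with a fourth-order remainder $24^{-1}\LipsPara_3\norm{\bzeta-\btheta}^4$, cancels the linear, quadratic, and cubic terms against $\ilparenthesis{\direction\direction^\transpose-\bI}$ via the Gaussian moment identities $\E\ilbracket{\direction^\transpose\bg\,\ilparenthesis{\direction\direction^\transpose-\bI}}=\zero$, $\E\ilbracket{2^{-1}\direction^\transpose\bH\direction\,\ilparenthesis{\direction\direction^\transpose-\bI}}=\bH$, $\E\ilbracket{\nabla^3\loss\ilparenthesis{\direction\otimes\direction\otimes\direction}\ilparenthesis{\direction\direction^\transpose-\bI}}=\zero$, and then pays the price of the high-order moment $\E\ilparenthesis{\norm{\direction}^4\norm{\direction\direction^\transpose-\bI}}=O(\Dimension^{\nicefrac{7}{2}})$. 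You instead pass to the right-hand side $\nabla^2\loss_{\perturb}\ilparenthesis{\btheta}=\E_{\direction}\ilbracket{\bH\ilparenthesis{\btheta+\perturb\direction}}$ of (\ref{eq:SteinHessian}) and expand the Hessian map itself to first order, needing only $\E\ilparenthesis{\direction}=\zero$ and the remainder bound $\frac{\LipsPara_3}{2}\perturb^2\norm{\direction}^2$. Both routes use exactly A.\ref{assume:loss3Lips}; yours is shorter, avoids the tensor moment identities, and incidentally yields the sharper dimension constant $O(\Dimension)$ in place of $O(\Dimension^{\nicefrac{7}{2}})$, at the cost of leaning on the interchange of $\nabla^2$ and $\E_{\direction}$ implicit in (\ref{eq:SteinHessian}) rather than only on the ZO-representation itself. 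One small caveat: your parenthetical crediting the antithetic $\pm\perturb_k\direction_k$ structure for the $O(\perturb_k^2)$ bias is misleading at this point in the argument --- once you are working with $\E_{\direction}\ilbracket{\bH\ilparenthesis{\btheta+\perturb\direction}}$, all four estimators (\ref{eq:Hhat-0})--(\ref{eq:Hhat-3}) share the same bias, and the cancellation is due solely to the symmetry of $\normalDist\ilparenthesis{\zero,\bI}$; the antithetic structure matters for the variance and noise-robustness comparison of Remark~\ref{rem:HAccuracy}, not for the bias.
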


	\begin{rem}\label{rem:HAccuracy}
		For the same reason in   Remark~\ref{rem:gAccuracy}, it is recommended to use (\ref{eq:Hhat-3}) over (\ref{eq:Hhat-0}--\ref{eq:Hhat-2}). 
	\end{rem}

	\remove{ 
	
	\begin{lemPrime}{2'} [Bias in Hessian Estimate] 
		\label{lem:BiasHessianPrime}	Under A.\ref{assume:loss4Bound},  A.\ref{assume:Noise}, and  A.\ref{assume:Perturbation}, $ \norm{ \E \ilbracket{\given{\hbH_k}{\field_k}   } - \bH\ilparenthesis{\hbtheta_k} } _{\infty} = O(\perturb_k^2) $. 
	\end{lemPrime}

}

	\subsection{Almost Sure Convergence}
	 {With the gain sequence properly weighting the bias and variance in the gradient and Hessian estimators, we can establish the almost surely convergence $ \hbtheta_k\stackrel{\mathrm{a.s.}}{\longrightarrow}\bvartheta $ as $k\to\infty$.  }
	\begin{thm}
		[Strong Convergence of Parameter]  \label{thm:ParameterConvergence} Under A.\ref{assume:loss3Lips}--A.\ref{assume:PDmapping}, $\hbtheta_k\to\bvartheta$ a.s.    as $k\to\infty$. 
	\end{thm}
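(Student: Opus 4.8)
The plan is to run the standard two-timescale stochastic-approximation argument, with the parameter recursion (\ref{eq:Newton-1}) on the slow timescale and the Hessian recursion (\ref{eq:Newton-2}) treated as a uniformly-bounded coefficient process whose errors are absorbed into a martingale-plus-bias decomposition. First I would write $\hbg_k = \bg(\hbtheta_k) + \bbeta_k + \bxi_k$, where $\bbeta_k \equiv \E_k(\hbg_k) - \bg(\hbtheta_k) = O(\perturb_k^2)$ is the bias (Lemma~\ref{lem:BiasGradient}) and $\bxi_k \equiv \hbg_k - \E_k(\hbg_k)$ is a martingale-difference sequence with $\E_k(\norm{\bxi_k}^2) = O(\perturb_k^{-2})$. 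Substituting into (\ref{eq:Newton-1}),
\begin{equation*}
\hbtheta_{k+1} - \bvartheta = (\hbtheta_k - \bvartheta) - \gain_k \obg_k - \gain_k [\mappingPD_k(\obH_k)]^{-1}(\bbeta_k + \bxi_k),
\end{equation*}
where $\obg_k \equiv [\mappingPD_k(\obH_k)]^{-1}\bg(\hbtheta_k)$ is the conditioned gradient of A.\ref{assume:Curvature}. The aggregate perturbation term is $\gain_k[\mappingPD_k(\obH_k)]^{-1}\bbeta_k$ (deterministic-order $O(\gain_k\perturb_k^2)$, summable in an $\ell^2$ sense after dividing by the drift, hence negligible by A.\ref{assume:Stepsize} and the uniform $L^{2+\delta}$ bound on $[\mappingPD_k]^{-1}$ in A.\ref{assume:PDmapping}) plus the martingale term $\gain_k[\mappingPD_k(\obH_k)]^{-1}\bxi_k$, whose conditional second moment is $O(\gain_k^2\perturb_k^{-2})$, summable by $\sum_k(\gain_k/\perturb_k)^2 < \infty$.

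The core of the argument is a Lyapunov/Robbins–Siegmund step applied to $V_k \equiv \norm{\hbtheta_k - \bvartheta}^2$. Expanding the square and taking $\E_k$, the cross term $-2\gain_k(\hbtheta_k-\bvartheta)^\transpose\obg_k \le -2\convexPara\gain_k\norm{\hbtheta_k-\bvartheta}$ by A.\ref{assume:Curvature} supplies the negative drift; the remaining contributions are the bias cross term (bounded by $2\gain_k\norm{[\mappingPD_k]^{-1}}\norm{\bbeta_k}\sqrt{V_k}$, which one controls using $2ab \le \uprho a^2 + \uprho^{-1}b^2$ so that a small multiple of $V_k$ is reabsorbed while the residual $O(\gain_k\perturb_k^2)$ — after accounting for the $L^{2+\delta}$ moment bound — is a summable term up to the $\sqrt{V_k}$ factor, which is itself $\limsup$-bounded by A.\ref{assume:UniformlyBounded}) and the genuinely summable pieces $\gain_k^2\E_k(\norm{[\mappingPD_k]^{-1}(\bbeta_k+\bxi_k)}^2) = O(\gain_k^2\perturb_k^{-2})$. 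This puts $V_k$ into the Robbins–Siegmund form $\E_k(V_{k+1}) \le (1 + u_k)V_k - 2\convexPara\gain_k\sqrt{V_k} + w_k$ with $\sum u_k < \infty$, $\sum w_k < \infty$, so $V_k$ converges a.s. to a finite random variable and $\sum_k \gain_k\sqrt{V_k} < \infty$ a.s. Since $\sum_k\gain_k = \infty$, the latter forces $\liminf_k V_k = 0$ a.s., and combined with the a.s. convergence of $V_k$ this yields $\hbtheta_k \to \bvartheta$ a.s. — except on the event where the sign-cancellation of A.\ref{assume:UniformlyBounded} could sabotage the drift, which (\ref{eq:Bounded}) rules out by forcing any excursion of $\hbtheta_k$ away from $\bvartheta$ by a fixed $\uprho$ to have eventually-correct conditioned-gradient signs.

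The step I expect to be the main obstacle is making the Hessian-recursion coupling rigorous: $\mappingPD_k(\obH_k)$ is itself driven by the noisy, $O(\perturb_k^{-4})$-variance estimates $\hbH_k$, so one must verify that $[\mappingPD_k(\obH_k)]^{-1}$ is uniformly bounded (in the required $L^{2+\delta}$ sense) \emph{along the trajectory} — not merely in expectation at a fixed point — and that the positive-definiteness-enforcing mapping does not destroy measurability or introduce non-summable fluctuations. Here I would lean on A.\ref{assume:PDmapping} (the uniform moment bound on $\norm{[\mappingPD_k(\hbH_k)]^{-1}}^{2+\delta}$, together with $\perturb_k^2[\mappingPD_k(\obH_k)]^{-1} \to \zero$ and $\norm{\mappingPD_k(\obH_k) - \obH_k} \to 0$) to close this gap, noting that the convergence $\obH_k \to \bH(\bvartheta)$ is \emph{not} needed for Theorem~\ref{thm:ParameterConvergence} itself — only the boundedness of the preconditioner and the curvature condition A.\ref{assume:Curvature} are used, which is why the two recursions can be decoupled at the level of almost-sure convergence and the Hessian's own convergence deferred to a separate result.
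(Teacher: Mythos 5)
Your proposal is essentially correct but takes a genuinely different route from the paper. The paper's proof is a two-line reduction: it verifies via Lemma~\ref{lem:BiasGradient} that $\E_k(\hbg_k)=\bg(\hbtheta_k)+\bias_k$ with $\perturb_k^{-2}\norm{\bias_k}$ uniformly bounded, and then invokes \citet[Thm.~1a]{spall2000adaptive} wholesale, noting that Spall's argument is agnostic to the particular form of the Hessian estimate and only needs A.\ref{assume:PDmapping} plus the bias/variance lemmas. Spall's underlying argument is of the Blum type --- a martingale-convergence step combined with the sign condition (\ref{eq:Bounded}) to rule out oscillation away from $\bvartheta$ --- which is precisely why A.\ref{assume:UniformlyBounded} is phrased the way it is. Your Robbins--Siegmund argument on $V_k=\norm{\hbtheta_k-\bvartheta}^2$ is instead a direct supermartingale construction that leans almost entirely on the steepness condition A.\ref{assume:Curvature}; under that condition the negative drift $-2\convexPara\gain_k\sqrt{V_k}$ is always present, so the sign condition (\ref{eq:Bounded}) becomes essentially redundant in your version rather than playing the load-bearing role it plays in Spall's. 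What your route buys is a self-contained proof; what the paper's route buys is brevity and reuse of a proof that does not require the curvature inequality to hold with a single uniform constant along the whole trajectory.

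One step of yours needs a patch. The bias cross term is of order $\gain_k\perturb_k^2\norm{[\mappingPD_k(\obH_k)]^{-1}}\sqrt{V_k}$, and A.\ref{assume:Stepsize} does \emph{not} guarantee $\sum_k\gain_k\perturb_k^2<\infty$ (take $\gain_k=k^{-1}$ and $\perturb_k\to 0$ slowly), so it cannot be dumped into the summable remainder $w_k$; likewise the Young split $2ab\le\uprho a^2+\uprho^{-1}b^2$ produces a term $\uprho\gain_k V_k$ whose coefficients are not summable, and a drift of the form $-\gain_k\sqrt{V_k}$ cannot reabsorb a $\gain_k V_k$ term when $V_k$ is merely $\limsup$-bounded. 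The correct fix is the one A.\ref{assume:PDmapping} is designed for: since $\perturb_k^2[\mappingPD_k(\obH_k)]^{-1}\to\zero$ a.s., the coefficient multiplying $\gain_k\sqrt{V_k}$ in the bias cross term is eventually smaller than $\convexPara$, so the whole term is absorbed into half of the negative drift for all large $k$. With that substitution, and with the martingale and quadratic terms made a.s.\ summable by combining $\sum_k(\nicefrac{\gain_k}{\perturb_k})^2<\infty$ with the uniform $(2+\BoundHessianMoment)$-moment bound on $\norm{[\mappingPD_k(\cdot)]^{-1}}$ (so that the expected series converges, hence the nonnegative random series converges a.s.), your Robbins--Siegmund conclusion $V_k\to 0$ a.s.\ goes through.
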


 \begin{thm}
 	[Strong Convergence of Hessian]  \label{thm:HessianConvergence} Under A.\ref{assume:loss3Lips}--A.\ref{assume:PDmapping}, $  \obH_k\to\bH\ilparenthesis{\bvartheta}$ a.s.    as $k\to\infty$, where $\obH_k$ is governed by (\ref{eq:Newton-2}) and $\hbH_k$ is computed per (\ref{eq:Hhat-3}). 
 \end{thm}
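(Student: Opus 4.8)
The plan is to treat (\ref{eq:Newton-2}) as a linear stochastic-approximation recursion whose target is $\bH\ilparenthesis{\bvartheta}$, leaning on two facts established beforehand: Theorem~\ref{thm:ParameterConvergence} (whose proof uses A.\ref{assume:PDmapping} on the mapping $\mappingPD_k$ rather than the conclusion sought here, so there is no circularity) and Lemma~\ref{lem:BiasHessian} (bias and variance of $\hbH_k$). First I would reduce to an entrywise claim, since $\obH_k\to\bH\ilparenthesis{\bvartheta}$ a.s.\ is equivalent to $\ilparenthesis{\obH_k}_{rs}\to\ilparenthesis{\bH\ilparenthesis{\bvartheta}}_{rs}$ a.s.\ for every $\ilparenthesis{r,s}$. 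Rewriting (\ref{eq:Newton-2}) as $\obH_{k+1}=\ilparenthesis{1-\gainH_k}\obH_k+\gainH_k\hbH_k$ and decomposing $\hbH_k=\bH\ilparenthesis{\hbtheta_k}+\bbeta_k+\bzeta_k$, with conditional bias $\bbeta_k\equiv\E_k\ilparenthesis{\hbH_k}-\bH\ilparenthesis{\hbtheta_k}$ and conditional-mean-zero noise $\bzeta_k\equiv\hbH_k-\E_k\ilparenthesis{\hbH_k}$, Lemma~\ref{lem:BiasHessian} supplies $\norm{\bbeta_k}=O\ilparenthesis{\perturb_k^2}$ and $\E_k\ilparenthesis{\norm{\hbH_k}^2}=O\ilparenthesis{\perturb_k^{-4}}$ a.s.; moreover Theorem~\ref{thm:ParameterConvergence} and the continuity of $\bH\ilparenthesis{\cdot}=\nabla^2\loss\ilparenthesis{\cdot}$ under A.\ref{assume:loss3Lips} give $\bH\ilparenthesis{\hbtheta_k}\to\bH\ilparenthesis{\bvartheta}$ a.s., so both $\bH\ilparenthesis{\hbtheta_k}-\bH\ilparenthesis{\bvartheta}$ and $\bbeta_k$ tend to $\zero$ a.s.

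Writing $E_k$ for a generic $\ilparenthesis{r,s}$ entry of $\obH_k-\bH\ilparenthesis{\bvartheta}$ and $b_k,\beta_k,\zeta_k$ for the matching entries of $\bH\ilparenthesis{\hbtheta_k}-\bH\ilparenthesis{\bvartheta}$, $\bbeta_k$ and $\bzeta_k$, the recursion becomes $E_{k+1}=\ilparenthesis{1-\gainH_k}E_k+\gainH_k\ilparenthesis{b_k+\beta_k+\zeta_k}$ with $b_k\to 0$, $\beta_k\to 0$ a.s., $\E_k\ilparenthesis{\zeta_k}=0$, and (using that $\hbH_k$ is symmetric) $\E_k\ilparenthesis{\zeta_k^2}\le\E_k\ilparenthesis{\norm{\hbH_k}^2}=O\ilparenthesis{\perturb_k^{-4}}$ a.s. By linearity I would split $E_k=R_k+T_k$, where $R_k$ solves the recursion driven only by $b_k+\beta_k$ and $T_k$ the one driven only by $\zeta_k$, started so that $R_0+T_0=E_0$, and handle the two separately. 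For $R_k$, on a fixed sample path: given $\upvarepsilon>0$ choose $N$ with $\gainH_k\le 1$ and $\abs{b_k+\beta_k}\le\upvarepsilon$ for $k\ge N$; an induction gives $\abs{R_{k+1}}\le\abs{R_N}\prod_{i=N}^{k}\ilparenthesis{1-\gainH_i}+\upvarepsilon$, and since $\gainH_k\to 0$ with $\sum_k\gainH_k=\infty$ (satisfied by the canonical $\gainH_k=\nicefrac{1}{\ilparenthesis{k+1}}$, and needed for (\ref{eq:Newton-2}) to be a consistent running average) the product vanishes, so $\limsup_k\abs{R_k}\le\upvarepsilon$ and hence $R_k\to 0$. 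For $T_k$, I would apply the Robbins--Siegmund lemma to $V_k\equiv T_k^2$: since $T_k$ is $\field_k$-measurable, $\E_k\ilparenthesis{\zeta_k}=0$, and $\ilparenthesis{1-\gainH_k}^2\le 1-\gainH_k$ once $\gainH_k\le 1$,
\[
\E_k\ilparenthesis{V_{k+1}}=\ilparenthesis{1-\gainH_k}^2 V_k+\gainH_k^2\,\E_k\ilparenthesis{\zeta_k^2}\le V_k-\gainH_k V_k+O\!\ilparenthesis{\ilparenthesis{\nicefrac{\gainH_k}{\perturb_k^2}}^2}\,,
\]
and $\sum_k\ilparenthesis{\nicefrac{\gainH_k}{\perturb_k^2}}^2<\infty$ by A.\ref{assume:Stepsize}; hence $V_k$ converges a.s.\ and $\sum_k\gainH_k V_k<\infty$ a.s., which with $\sum_k\gainH_k=\infty$ forces the a.s.\ limit of $V_k$ to be $0$, i.e.\ $T_k\to 0$ a.s. Collecting the pieces, $E_k\to 0$ for every $\ilparenthesis{r,s}$, i.e.\ $\obH_k\to\bH\ilparenthesis{\bvartheta}$ a.s.

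The hard part is the noise term $T_k$, and specifically the tension built into the choice of the smoothing rate $\gainH_k$: by Lemma~\ref{lem:BiasHessian} the conditional second moment of $\hbH_k$ blows up like $\perturb_k^{-4}$, so $\gainH_k$ must decay quickly enough that $\sum_k\ilparenthesis{\nicefrac{\gainH_k}{\perturb_k^2}}^2<\infty$ to damp that noise, yet it must not be summable so that (\ref{eq:Newton-2}) forgets its initialization $\obH_0$ --- exactly the window isolated by A.\ref{assume:Stepsize}. A smaller technical item is to upgrade the unconditional moment bounds of A.\ref{assume:Perturbation} and Lemma~\ref{lem:BiasHessian} to a pathwise bound valid on the a.s.\ event $\set{\limsup_k\norm{\hbtheta_k}<\infty}$ of A.\ref{assume:UniformlyBounded}, so that $\sum_k\gainH_k^2\,\E_k\ilparenthesis{\zeta_k^2}<\infty$ holds almost surely; this uses that $\direction_k\sim\normalDist\ilparenthesis{\zero,\bI}$ is independent of $\field_k$ and that, by A.\ref{assume:loss3Lips}, $\loss\ilparenthesis{\cdot}$ grows at most cubically, so $\loss\ilparenthesis{\hbtheta_k\pm\perturb_k\direction_k}$ has bounded conditional moments whenever $\hbtheta_k$ stays in a compact set.
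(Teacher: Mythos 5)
Your proof is correct and follows essentially the same route as the paper's: decompose $\hbH_k$ into a conditional bias of order $O(\perturb_k^2)$ plus a martingale-difference noise term whose accumulated effect is controlled via $\sum_k\ilparenthesis{\nicefrac{\gainH_k}{\perturb_k^2}}^2<\infty$ from A.\ref{assume:Stepsize}, and let Theorem~\ref{thm:ParameterConvergence} together with continuity of $\bH\ilparenthesis{\cdot}$ handle the signal term; the paper invokes the martingale convergence theorem where you invoke Robbins--Siegmund, but these are interchangeable here. One point you rightly make explicit is that the argument also needs $\gainH_k\le 1$ eventually and $\sum_k\gainH_k=\infty$ (so that (\ref{eq:Newton-2}) forgets $\obH_0$ and the a.s.\ limit of $V_k$ is forced to zero); this condition is not actually listed in A.\ref{assume:Stepsize} and is only implicit in the paper's canonical choice $\gainH_k=\nicefrac{1}{\ilparenthesis{k+1}}$.
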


\subsection{Rate of Convergence }

 Let us find the convergence rate here. We enforce the form of the stepsizes: $ \gain_k = \nicefrac{\gain}{(k+1)^{\upalpha}} $, $ \perturb_k=\nicefrac{\perturb}{(k+1)^{\upgamma}} $, and $ \gainH_k $ satisfies A.\ref{assume:Stepsize}. Let  $
 \uptau\equiv \upalpha-2\upgamma$ and $ \uptau_+ \equiv \uptau\cdot\indicator_{\ilset{\upalpha=1}}$

\begin{thm}
	[Asymptotic Normality]  \label{thm:Normality} Assume A.\ref{assume:loss3Lips}--A.\ref{assume:PDmapping} hold. Pick $\gain> \nicefrac{\uptau_+ }{\ilbracket{ 2\uplambda_{\min} \ilset{        \bH\ilparenthesis{\bvartheta}    } }} $ and $ \upalpha\le 6\upgamma $, we have 
	\begin{equation}\label{eq:normal}
	k^{\nicefrac{\uptau}{2}} \ilparenthesis{\hbtheta_k-\bvartheta} \stackrel{\mathrm{dist.}}{\longrightarrow} \normalDist\ilparenthesis{\bmu,\bLambda}\,,
	\end{equation} 
	where      $\direction\sim\normalDist\ilparenthesis{\zero,\bI}$ and 
\begin{subnumcases}{\label{eq:normality} } \bmu =  \frac{\gain\perturb^2}{{3\uptau_+ -6 \gain}} \ilbracket{\bH\ilparenthesis{\bvartheta}}^{-1} \E\ilbracket{ \loss^{(3)}\ilparenthesis{\bvartheta}  \ilparenthesis{\direction\otimes\direction\otimes\direction}   
	\direction } \,, \label{eq:normMean} 
\\   \bLambda = \frac{\gain^2  \Var \ilbracket{\lossNoisy\ilparenthesis{\bvartheta,\upomega}}  }{2\perturb^2\ilparenthesis{2\gain-\uptau_+}}  \ilbracket{\bH\ilparenthesis{\bvartheta}}^{-2}\,.  \label{eq:normCov}
\end{subnumcases}

\end{thm}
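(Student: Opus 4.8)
The plan is to obtain the CLT by the classical Fabian-type argument for stochastic approximation \citep{spall1992multivariate, spall2000adaptive}, applied to the linearization of (\ref{eq:Newton-1}) about $\bvartheta$. Put $\bdelta_k\equiv\hbtheta_k-\bvartheta$. Theorems~\ref{thm:ParameterConvergence}--\ref{thm:HessianConvergence} give $\bdelta_k\to\zero$ and $\obH_k\to\bH\ilparenthesis{\bvartheta}$ a.s., and A.\ref{assume:PDmapping} upgrades this to $\ilbracket{\mappingPD_k\ilparenthesis{\obH_k}}^{-1}\to\ilbracket{\bH\ilparenthesis{\bvartheta}}^{-1}$ a.s. Write the gradient estimate (\ref{eq:ghat-3}) as $\hbg_k=\bg\ilparenthesis{\hbtheta_k}+\bbeta_k+\bxi_k$, with bias $\bbeta_k\equiv\E_k\ilparenthesis{\hbg_k}-\bg\ilparenthesis{\hbtheta_k}$ and martingale difference $\bxi_k\equiv\hbg_k-\E_k\ilparenthesis{\hbg_k}$; since A.\ref{assume:loss3Lips} makes $\bg$ twice continuously differentiable, Taylor's theorem gives $\bg\ilparenthesis{\hbtheta_k}=\bH\ilparenthesis{\bvartheta}\bdelta_k+O\ilparenthesis{\norm{\bdelta_k}^2}$. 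Substituting into (\ref{eq:Newton-1}) produces
\begin{equation*}
	\bdelta_{k+1}=\ilparenthesis{\bI-\gain_k\bGamma_k}\bdelta_k-\gain_k\ilbracket{\mappingPD_k\ilparenthesis{\obH_k}}^{-1}\bbeta_k-\gain_k\ilbracket{\mappingPD_k\ilparenthesis{\obH_k}}^{-1}\bxi_k\,,
\end{equation*}
where $\bGamma_k$ equals $\ilbracket{\mappingPD_k\ilparenthesis{\obH_k}}^{-1}\bH\ilparenthesis{\bvartheta}$ plus an $o\ilparenthesis{1}$-term absorbing the quadratic remainder, so $\bGamma_k\to\bI$ a.s. This is exactly the linear recursion governed by Fabian's theorem, and the remaining work is to identify the precise asymptotics of the three driving terms.

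First I would sharpen Lemma~\ref{lem:BiasGradient} at $\bvartheta$. Expanding $\loss\ilparenthesis{\hbtheta_k\pm\perturb_k\direction_k}$ to third order and controlling the remainder with the $\LipsPara_3$-Lipschitz bound of A.\ref{assume:loss3Lips}, the even-order terms cancel under the central difference and, after $\E_{\direction}$ with $\direction\sim\normalDist\ilparenthesis{\zero,\bI}$, one is left with $\bbeta_k=\tfrac{1}{6}\perturb_k^2\,\E_{\direction}\ilbracket{\loss^{(3)}\ilparenthesis{\hbtheta_k}\ilparenthesis{\direction\otimes\direction\otimes\direction}\direction}+O\ilparenthesis{\perturb_k^3}$; continuity of $\loss^{(3)}$ and $\hbtheta_k\to\bvartheta$ then give $\ilbracket{\mappingPD_k\ilparenthesis{\obH_k}}^{-1}\bbeta_k=\tfrac{1}{6}\perturb_k^2\ilbracket{\bH\ilparenthesis{\bvartheta}}^{-1}\E_{\direction}\ilbracket{\loss^{(3)}\ilparenthesis{\bvartheta}\ilparenthesis{\direction\otimes\direction\otimes\direction}\direction}+o\ilparenthesis{\perturb_k^2}$. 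This is precisely the step in which the weakened smoothness hypothesis is used: the fourth-order Taylor coefficient need never exist, only the third-order remainder need be $O\ilparenthesis{\perturb_k^3}$ after differencing. Next, for the martingale term I would show that the $\direction_k$-fluctuation of the noise-free central difference contributes conditional covariance $O\ilparenthesis{\norm{\bg\ilparenthesis{\hbtheta_k}}^2}=O\ilparenthesis{\norm{\bdelta_k}^2}=o\ilparenthesis{\perturb_k^{-2}}$, that the corresponding cross term with the observation noise vanishes by the zero-mean condition in A.\ref{assume:Noise}, and hence, using the independence of $\upomega_k^{\pm}$ and the moment bounds in A.\ref{assume:Noise}--A.\ref{assume:Perturbation}, $\E_k\ilparenthesis{\bxi_k\bxi_k^\transpose}=\ilparenthesis{2\perturb_k^2}^{-1}\Var\ilbracket{\lossNoisy\ilparenthesis{\bvartheta,\upomega}}\bI+o\ilparenthesis{\perturb_k^{-2}}$; Gaussianity of $\direction_k$ together with those same moment bounds yields the conditional Lindeberg condition.

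With $\gain_k=\gain\ilparenthesis{k+1}^{-\upalpha}$ and $\perturb_k=\perturb\ilparenthesis{k+1}^{-\upgamma}$, the martingale term then has per-step conditional covariance $\sim\ilparenthesis{k+1}^{-\ilparenthesis{\upalpha+\uptau}}\cdot\tfrac{\gain^2}{2\perturb^2}\Var\ilbracket{\lossNoisy\ilparenthesis{\bvartheta,\upomega}}\ilbracket{\bH\ilparenthesis{\bvartheta}}^{-2}$, which fixes the rate $k^{\uptau/2}$, while the bias term enters the limit at the same scale exactly when $k^{\uptau/2}\perturb_k^2$ stays bounded, i.e.\ when $\upalpha\le 6\upgamma$ (otherwise it is asymptotically negligible). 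Feeding these into Fabian's theorem, the limiting drift matrix $\gain\bI$ is a scalar multiple of the identity, so the matrix Lyapunov integral reduces to $\ilparenthesis{2\gain-\uptau_+}\bLambda=\tfrac{\gain^2}{2\perturb^2}\Var\ilbracket{\lossNoisy\ilparenthesis{\bvartheta,\upomega}}\ilbracket{\bH\ilparenthesis{\bvartheta}}^{-2}$, giving (\ref{eq:normCov}); the limiting deterministic mean recursion is solved by $\ilparenthesis{\gain-\tfrac{\uptau_+}{2}}^{-1}$ applied to $-\tfrac{1}{6}\gain\perturb^2\ilbracket{\bH\ilparenthesis{\bvartheta}}^{-1}\E_{\direction}\ilbracket{\loss^{(3)}\ilparenthesis{\bvartheta}\ilparenthesis{\direction\otimes\direction\otimes\direction}\direction}$, giving (\ref{eq:normMean}); and the eigenvalue/stability requirement of Fabian's theorem on the limiting drift is exactly $\gain>\uptau_+/\ilbracket{2\uplambda_{\min}\ilset{\bH\ilparenthesis{\bvartheta}}}$ together with $\upalpha\le 6\upgamma$.

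The hard part will be the joint bookkeeping: Fabian's theorem demands all of its hypotheses simultaneously \emph{with the correct constants} --- the sharp $O\ilparenthesis{\perturb_k^2}$ bias under only a Lipschitz third derivative, the sharp $\perturb_k^{-2}$ conditional noise covariance, and the a.s.\ convergence $\bGamma_k\to\bI$ of the \emph{random} gain matrices (which forces one to use a version of Fabian's result allowing the gain matrices to converge only almost surely rather than deterministically, as in \citet{spall2000adaptive}). Verifying the conditional Lindeberg condition from the comparatively weak moment controls of A.\ref{assume:Noise}--A.\ref{assume:Perturbation}, and confirming that the $\direction_k$-driven and cross terms are genuinely lower order so as not to perturb $\bLambda$, are the remaining delicate points.
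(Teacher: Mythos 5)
Your proposal is correct and follows essentially the same route as the paper, which simply invokes the general Fabian-type normality result of \citet[Thm.~3]{zhu2020harp} after specifying the conditioner $\ilbracket{\mappingPD_k\ilparenthesis{\obH_k}}^{-1}\to\ilbracket{\bH\ilparenthesis{\bvartheta}}^{-1}$ and the Gaussian perturbation; your linearization, the $\tfrac{1}{6}\perturb_k^2\E\ilbracket{\loss^{(3)}\ilparenthesis{\bvartheta}\ilparenthesis{\direction\otimes\direction\otimes\direction}\direction}$ bias, the $\ilparenthesis{2\perturb_k^2}^{-1}\Var\ilbracket{\lossNoisy\ilparenthesis{\bvartheta,\upomega}}\bI$ noise covariance, and the Lyapunov/mean-recursion bookkeeping reproduce exactly the specialization the paper leaves implicit, and your constants match (\ref{eq:normMean})--(\ref{eq:normCov}).
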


\begin{rem}
	Note that when $\upomega\sim\Prob$ is drawn i.i.d. from $\Omega$, we have $ \E\ilbracket{\given{\ilparenthesis{\lossNoise_k^+ - \lossNoise_k^-}^2}{\hbtheta_k,\direction_k}} \to 2\Var\ilbracket{\lossNoisy\ilparenthesis{\bvartheta,\upomega}} $ a.s., where the variance is taken over $\upomega\in\Omega$.  This is due to $\hbtheta_k\stackrel{\mathrm{a.s.}}{\longrightarrow} \bvartheta$ shown in Theorem~\ref{thm:ParameterConvergence} and $\perturb_k\to 0$ assumed in A.\ref{assume:Stepsize}. 
\end{rem}

	\begin{rem}
		Although the possible fastest rate  remains to be $ O(k^{-\nicefrac{1}{3}}) $ for $\gain_k = O(k^{-1})$,  $ \perturb_k = O(k^{-\nicefrac{1}{6}})$,  and $ \gainH_k $ such that $ \sum_k\gainH_k^2 \perturb_k^{-4}<\infty $, 
		 (\ref{eq:Hhat-3}) only requires \emph{three} ZO queries to form one single Hessian estimate  whereas \algoName{2SPSA} requires \emph{four} ZO queries. Moreover, (\ref{eq:Hhat-3}) only requires generating  \emph{one} perturbation $\direction_k$  (via Monte Carlo) and differencing stepsize $\perturb_k$ (which requires hyper-parameter tuning), while \algoName{2SPSA} requires \emph{two} statistically independent perturbations and  \emph{two} differencing stepsizes \citep{zhu2020sotchastic}.

	\end{rem}

\subsection{{Comparison with \algoName{2SPSA}}} \label{sect:compare}
We can now explain Sect.~\ref{sect:contribution} in more details.	Specifically, \algoName{2SPSA} for per-iteration Hessian estimate requires sampling two random perturbations $ \bDelta_k $ and $ \tilde{\bDelta}_k $ (commonly each component of the random perturbations are i.i.d. Rademacher distributed) and tuning two perturbation magnitudes $ \perturb_k $ and $ \tperturb_k $. It has the following form:
\begin{equation}\label{eq:2SPSA}
\begin{cases}
&\hbH_k \gets \ilparenthesis{2\perturb_k\tperturb_k} ^{-1} \ilparenthesis{\lossNoisy_k^{+,+}  - \lossNoisy_k^{-,+}  + \lossNoisy_k^-- \lossNoisy_k^+} \ilparenthesis{ \widetilde{\bDelta}_k^{-1} \bDelta_k^{-\transpose} }\,,\\
&\hbH_k \gets \nicefrac{1}{2} \ilparenthesis{\hbH_k + \hbH_k^\transpose}\,, \quad\text{ for symmetrization}\,, 
\end{cases}
\end{equation}
	where $ \lossNoisy_k^{\pm} = \lossNoisy\ilparenthesis{\hbtheta_k\pm\perturb_k\bDelta_k, \upomega_k^{\pm}} $ and $ \lossNoisy_k^{\pm, +} = \lossNoisy(\hbtheta_k \pm \perturb_k\bDelta_k + \tperturb_k\widetilde{\bDelta}_k, \upomega_k^{\pm, +} ) $, $ \bDelta^{-1} $ indicates component-wise inverse\footnote{\citet{spall2000adaptive} requires that both $\bDelta_k$ and $\tilde{\bDelta}_k$ have bounded inverse-second-moment. For Rademacher distributed random perturbation, component-wise inverse is well-defined as every component has zero probability of being zero.}. 
	Comparing (\ref{eq:2SPSA}) and (\ref{eq:Hhat-3}) side-by-side, we shall see the bullet-point comparison in  Sect.~\ref{sect:contribution}, which is further summarized in Table~\ref{tab:summary}. 
\begin{table}[!htbp]
	\begin{tabular}{|l||c|c|c|c|c|} \hline
		 	& Optimal Rate*  & Queries & Injected Randomness &Hyper-para & Weighting $ w_k $ \\ \hline\hline
		\algoName{2SPSA}	& \multirow{2}{*}{$ O(k^{-\nicefrac{1}{3}}) $}  & 4  & Rademacher $\bDelta_k$, $\widetilde{\bDelta}_k$ &$ \perturb_k $, $ \tperturb_k $& $ \nicefrac{1}{k+1} $  \\  
		Algo~\ref{algo:Newton}	&  &3   & Standard MVN $ \direction_k $ &$\perturb_k$ & $ \sum_k {w_k}{\perturb_k^{-4}}<\infty $ \\ \hline 
	\end{tabular}
	\caption{Comparison between \algoName{2SPSA} and Algorithm~\ref{algo:Newton}. }
	\label{tab:summary}
\end{table}

Note that the convergence rate  * in \algoName{2SPSA} requires ``four-times continuously differentiable with bounded fourth-order derivatives'', while Algorithm~\ref{algo:Newton} requires ``thrice continuously differentiable with Lipschitz-continuous third-order derivatives.'' Concretely, the bias and variance for the gradient/Hessian estimates for \algoName{2SPSA} requires the bound of the fourth-order derivative, while those in Lemma~\ref{lem:BiasGradient}/\ref{lem:BiasHessian} requires the Lipschitz continuity constant of the third-order derivatives. Moreover, the fourth-order continuity in \algoName{2SPSA} cannot be relaxed, as it is required in  Taylor-expansion derivation.

To further dissect the big-$O$   optimal rate  in Table~\ref{tab:summary}, we interpret Theorem~\ref{thm:Normality} following \citet[Sect. 3]{zhu2020harp}: $
\lim_{k\to\infty} \ilbracket{\E\ilparenthesis{\norm{\hbtheta_k-\bvartheta}^2}}^{\nicefrac{1}{2}} = k^{-\nicefrac{\uptau}{2}} \bracket{\norm{\bmu}^2 + \tr\ilparenthesis{\bLambda}}$.  	The estimates generated by  \algoName{2SPSA} similarly satisfies $ k^{\nicefrac{\uptau}{2}} \ilparenthesis{\hbtheta_k^{\mathrm{2SPSA}}-\bvartheta} \stackrel{\mathrm{dist.}}{\longrightarrow}\normalDist\ilparenthesis{\bmu^{\mathrm{2SPSA}}, \bLambda^{\mathrm{2SPSA}}}$,  where $\bDelta$
 	and $ \bLambda^{\mathrm{2SPSA}}$ coincides with (\ref{eq:normCov}).   Let each component of $\bDelta$ be i.i.d. Rademacher distributed, then  $\bmu^{\mathrm{2SPSA}} = \nicefrac{\gain\perturb^2}{\ilparenthesis{3\uptau_+ - 6\gain}} \ilbracket{\bH\ilparenthesis{\bvartheta}}^{-1} \E\ilbracket{\loss^{(3)} \ilparenthesis{\bvartheta} \parenthesis{\bDelta\otimes\bDelta \otimes\bDelta} \bDelta^{-1}} $. Unfortunately, it is difficult to make a concrete comparison between $\bmu$ in (\ref{eq:normCov}) and $\bmu^{\mathrm{2SPSA}} $, as it depends on the third-order derivative of the underlying \emph{unknown} loss function $\loss\ilparenthesis{\cdot}$.

	\section{Numerical Experiment}\label{sect:Numerical}
	To support that Stein's identity helps reducing the per-iteration ZO-queries, this section provides one synthetic illustration and one real-data example. 
	\subsection{Skewed-Quartic Function}
	This section compares   Algorithm~\ref{algo:Newton} with the counterpart \algoName{2SPSA}. 
	The loss function here is the skew-quartic function
	\begin{equation*}
	\loss\ilparenthesis{\btheta} = \btheta^\transpose\bA^\transpose\bA\btheta + 0.1 \sum_{i=1}^{\Dimension} (\bA\btheta)_i^3 + 0.01  \sum_{i=1}^{\Dimension} (\bA\btheta)_i^4\,,
	\end{equation*}
	where $(\cdot)_i$ is the $i$th component of the argument vector, and $\bA$ is such that $\Dimension\bA$ is an upper-triangular matrix of all ones. The additive noise in $\lossNoisy\ilparenthesis{\cdot}$ is independent $ \normalDist\ilparenthesis{0,0.1} $, i.e., $ \lossNoisy\ilparenthesis{\btheta,\upomega} = \loss\ilparenthesis{\btheta} + \lossNoise\ilparenthesis{\upomega} $, where $\lossNoise\ilparenthesis{\upomega} \sim\normalDist\ilparenthesis{0,0.1}$. Though $ \bH\ilparenthesis{\btheta}\succ \nicefrac{5}{4}\bA^\transpose\bA $, the Hessian function is   ill-conditioned\footnote{{For example, when $\Dimension=20$,  the Hessian matrix evaluated at a vector of all ones is approximately $778$.} }.

	We use $\Dimension=20$ and initialize $\hbtheta_0$ within $ \ilbracket{-5,5}^{\Dimension} $. Both algorithms are run with iteration $\iterTotal=10000$ with $50$ replicates (i.e., all the performance will be averaged over $50$ replicates). 
	We use  $\perturb_k=\nicefrac{1}{(k+1)^{0.101}}$ and $ \gain_k = \nicefrac{1}{(k+1+A)}^{0.602} $  and $A$ equals $ 10\% $ of the total iteration number $\iterTotal$ for both (\ref{eq:Newton}) and \algoName{2SPSA}. For fair comparison, we use $ \gainH_k = \nicefrac{1}{k+1} $ for both methods. During the implementation, both algorithms use exactly twelve\footnote{Using twelve queries, four $\hbH_k$ can be produced for Algorithm~\ref{algo:Newton}, and three $\hbH_k$ can be produced for \algoName{2SPSA}.  } ZO queries per iteration, so that the query complexity \emph{aligns} with the iteration complexity. We see from Figure~\ref{fig:Skew-Quartic} that Algorithm~\ref{algo:Newton} is more query-efficient than \algoName{2SPSA}, in terms of the attainable accuracy using the \emph{same} ZO queries. 
	
	\begin{figure}[!htbp]
			\centering
	\includegraphics[width=.5\linewidth]{./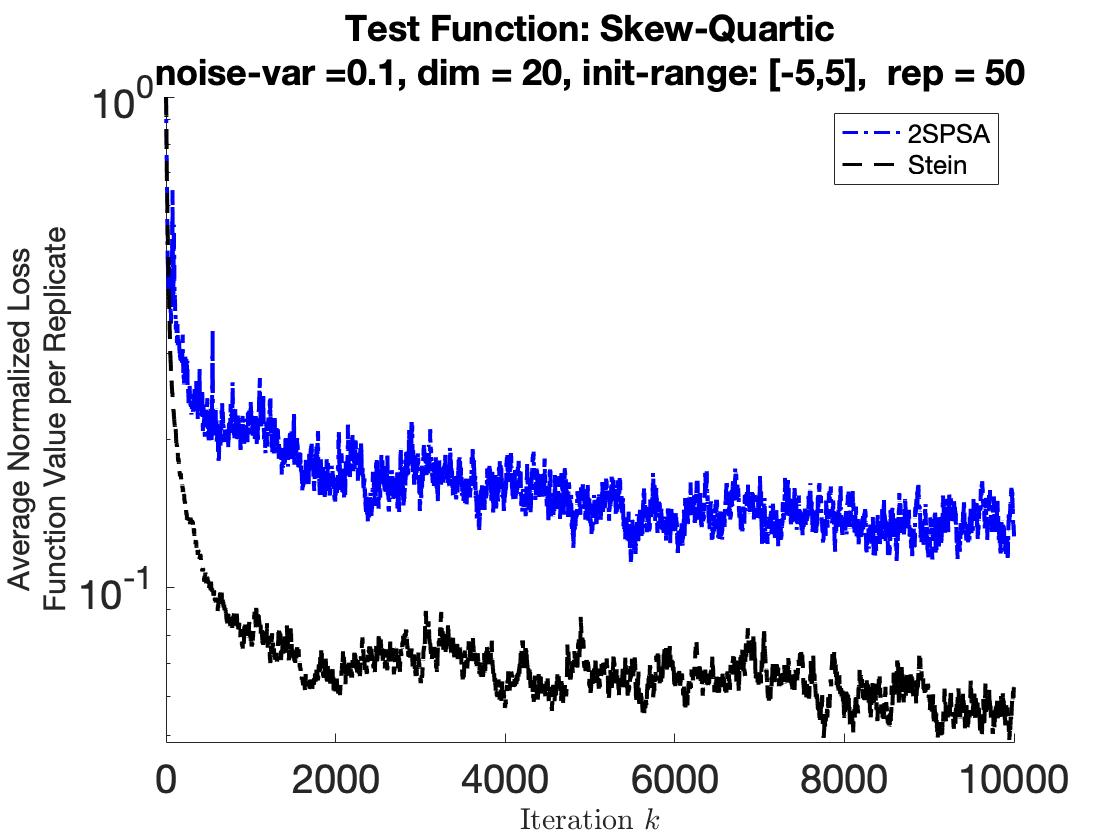} 
		\caption{  Performance of Algorithm~\ref{algo:Newton} and \algoName{2SPSA} in terms of normalized distance $ \nicefrac{\ilbracket{\loss\ilparenthesis{\hbtheta_k} - \loss \ilparenthesis{\bvartheta}}}{\ilbracket{\loss\ilparenthesis{\hbtheta_0} - \loss \ilparenthesis{\bvartheta}}} $  average  across $50$ independent replicates. Both algorithms use twelve ZO queries per iteration,  {so query complexity aligns with iteration complexity}. The underlying loss function is the skew-quartic function with $\Dimension=20$, and the noisy observation is corrupted by $\normalDist\ilparenthesis{0, 0.1}$ noise.    }
		\label{fig:Skew-Quartic}
	\end{figure}

	\subsection{ {Black-Box Binary Classification}}
	\label{sect:classification} 
	
 	We now apply both \algoName{2SPSA} and the proposed algorithm to solve the black-box binary classification task \citet{huang2020accelerated}. The dataset  \href{https://www.csie.ntu.edu.tw/~cjlin/libsvmtools/datasets/binary.html#phishing}{\textsc{Phishing}}   has $\sampleTotal = 11055$ samples $ \ilparenthesis{\bx_\sampleIndex, y_\sampleIndex}_{1\le \sampleIndex \le \sampleTotal} $, where the features $\bx_\sampleIndex$ are $68$-dimensional, and $ y_\sampleIndex\in\set{-1,1} $. Consider the following nonconvex corr-entropy induced loss  \begin{equation}\label{eq:classification}
	 \loss\ilparenthesis{\btheta} = \frac{1}{\sampleTotal} \sum_{\sampleIndex=1}^I \frac{\upkappa ^2}{2} \set{1- \exp\bracket{-\frac{\ilparenthesis{y_\sampleIndex-\bx_\sampleIndex^\transpose\btheta}^2}{\upkappa^2}} }\,,
	 \end{equation} 
	 where  $\btheta$ is  $68$-dimensional, and the $\upkappa$ is some penalty parameter. Note that $\loss\ilparenthesis{\btheta}=0$ when all samples are correctly classified. The noisy observation $\lossNoisy\ilparenthesis{\btheta,\upomega}$ is 
	 \begin{equation}\label{eq:classification2}
	 \lossNoisy \ilparenthesis{\btheta,\upomega} = \frac{1}{\sampleBatch} \sum_{j=1}^\sampleBatch \frac{\upkappa ^2}{2} \set{1- \exp\bracket{-\frac{\ilparenthesis{y_{i_j\ilparenthesis{\upomega}}-\bx_{i_j\ilparenthesis{\upomega}}^\transpose\btheta}^2}{\upkappa^2}} }\,,
	 \end{equation}
	  for $J\le I$, and the $J$ indexes $ \ilset{i_1\ilparenthesis{\upomega},\cdots, i_J\ilparenthesis{\upomega}} $ are i.i.d. uniformly drawn from $ \set{1,\cdots,I} $ (without replacement as discussed in Sect.~\ref{sect:data}). 
	  
	  Consider (\ref{eq:classification}) with penalty parameter $\upkappa=10$ and (\ref{eq:classification2}) with mini-batch size $\sampleBatch=10$. Both \algoName{2SPSA} and Algorithm~\ref{algo:Newton} are initialized at 
	a 68-dimensional vector of all ones. The ZO-query per iteration for both algorithm is twelve\footnote{With $12$ queries, \algoName{2SPSA} can construct the average of \emph{three} Hessian estimators, as each of \algoName{2SPSA} Hessian estimators costs $4$ ZO-queries. Similarly, Algorithm~\ref{algo:Newton} can construct the average of \emph{four} Hessian estimators, each of which costs $3$ queries. }, so the query complexity \emph{aligns} with the iteration complexity. We perform $25$ independent replicates, each with $K=5000$ iterations. 	
	\begin{figure}[!htbp]
		\centering
		\includegraphics[width=.5\linewidth]{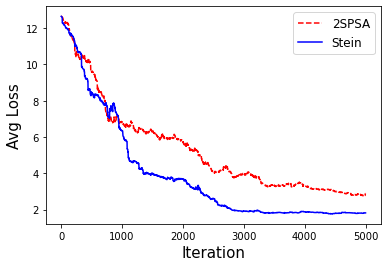} 
		\caption{  Performance of Algorithm~\ref{algo:Newton} and \algoName{2SPSA} in terms of the true loss function $\loss\ilparenthesis{\hbtheta_k}$  average  across $25$ independent replicates. Both algorithms use twelve ZO queries per iteration, so query complexity aligns with iteration complexity.  A zero loss function is equivalent to $100\%$ classification correctness. }
		\label{fig:Classification}
	\end{figure}
	Figure~\ref{fig:Classification} shows the convergence of the two algorithms on the black-box binary classification problem. We see that Algorithm~\ref{algo:Newton} is more query-efficient than \algoName{2SPSA} in terms of the attainable accuracy using the \emph{same} ZO queries.

	\section{Summary and Concluding Remarks}\label{sect:Concluding}
		
	This work proposes (\ref{algo:Newton}) in solving the minimization problem when noisy ZO oracle is available. 	The key improvement from (\ref{eq:Hhat-3}) to \algoName{2SPSA}
		 is the 
		 reduced per-iteration  ZO query number.
		Besides, ours and \algoName{2SPSA} differ in the number of functions sampled, the sampling scheme for random perturbations, and the way in which the gradient/Hessian approximations are derived, as summarized in Sect.~\ref{sect:contribution} and revisited in Sect.~\ref{sect:compare}.

		Several potential future work includes (i) extension to scenarios where unbiased direct measurements of $\bg\ilparenthesis{\btheta}$ are available; (ii) extension for possible distribution for  $\direction $, e.g.,  other continuous distributions per Remark~\ref{rem:distExtensionCont} and discrete distributions per Remark~\ref{rem:distExtensionDisc}; and (iii) leveraging importance sampling techniques  in Monte Carlo sampling our estimators once the distribution for perturbation $\direction$ is revealed. 
		
		\section*{Acknowledgment}
		The author would like to thank Dr. Zhenliang Zhang, Dr. Jian Tan, and Dr. Wotao Yin for inspirational discussion.

	\bibliography{ref}

	\appendix
	
	\section{Supplementary Proofs} \label{appendix}
	 
	\begin{proof}
		[Proof for Lemma~\ref{lem:Stein}] 
		
		\begin{enumerate}[i)]
			\item 
		Let us  discuss the   \emph{general} setting where the underlying loss function $\loss\ilparenthesis{\cdot}$   convolutes with a Gaussian distribution with a  covariance matrix\footnote{We assume all the covariance matrix are symmetric and positive-definite.} $\bSigma$:
		\begin{equation}\label{eq:LossGaussConv}
		\loss(\given{\btheta}{\bSigma})\equiv    \int_{\real^{\Dimension}} \loss (\bzeta ) \upvarphi(\given{\bzeta}{\btheta,\bSigma} )\diff \bzeta \,, 
		\end{equation}
		where  $
		\upvarphi  (\given{\bzeta}{\btheta,\bSigma})\equiv \ilbracket{  \ilparenthesis{2\uppi}^{\Dimension} \abs{\bSigma} }^{-\nicefrac{1}{2}} \exp \bracket{ - \nicefrac{1}{2} \ilparenthesis{\bzeta-\btheta}^\transpose\bSigma^{-1} \ilparenthesis{\bzeta - \btheta} }
	$
		is   the probability density function of the multivariate normal distribution $ \normalDist\ilparenthesis{\btheta,\bSigma} $. 
		By Leibniz's rule, 
		\begin{equation}\label{eq:Leibniz}
		\begin{cases}
	&	\nabla	\loss\ilparenthesis{\given{\btheta}{\bSigma}} = \int _{\real^{\Dimension}} \bSigma^{-1} \ilparenthesis{\bzeta-\btheta}    \upvarphi (\given{\bzeta}{\btheta,\bSigma}) \loss \ilparenthesis{\bzeta} \diff\bzeta \,,  \\
		&		\nabla^2	\loss\ilparenthesis{\given{\btheta}{\bSigma}} = \int_{\real^{\Dimension}}   \bracket{  \bSigma^{-1} \parenthesis{\bzeta-\btheta} \parenthesis{\bzeta-\btheta}^\transpose \bSigma^{-1}  -  \bSigma^{-1}  }   \upvarphi\ilparenthesis{\given{\bzeta}{\btheta,\bSigma}} \loss\ilparenthesis{\bzeta}\diff\bzeta \,.
		\end{cases}
		\end{equation}
		We now  rewrite (\ref{eq:LossGaussConv}--\ref{eq:Leibniz}) compactly as follows: 
		\begin{equation}\label{eq:SteinCompact}
		\begin{dcases}
		& 	\loss\ilparenthesis{\given{\btheta}{\bSigma}}  = \E\bracket{\loss \ilparenthesis{\btheta+ \bC \direction }}\,,  \\ 
		&  	\nabla \loss\ilparenthesis{\given{\btheta}{\bSigma}}  =  \E\bracket{ \bC^{-1} \direction    \loss \ilparenthesis{\btheta+\bC \direction }}\,, \\
		& 	\nabla^2\loss\ilparenthesis{\given{\btheta}{\bSigma}} =\E \set{\ilbracket{ \bC^{-1}\parenthesis{\direction\direction^\transpose -\bI  }\bC^{-1}}    \loss \ilparenthesis{\btheta+\bC \direction}}\,, 
		\end{dcases}
		\end{equation}
		where the expectation is taken with respect to the random variable  $\direction\sim\normalDist\ilparenthesis{\zero, \bI}$ and $\bC$ is a symmetric matrix\footnote{Suppose  $\bSigma$ has eigen-decomposition form as $ \bU\bD\bU^\transpose $ for some unitary matrix $\bU$ and diagonal matrix $\bD$, then $\bC = \bU \sqrt{\bD}\bU^\transpose$ is symmetric and satisfies $ \bSigma=\bC^2 $. } such that $ \bSigma= \bC ^2 $. 	  
		Recall  that $\loss_{\perturb} \ilparenthesis{ {\btheta}{ }} = \E_{\direction\sim\normalDist\ilparenthesis{\zero,\bI}}\bracket{\loss\ilparenthesis{\btheta+\perturb\direction}}$. Letting $\bC = \perturb\bI$ (equivalently  $\bSigma= \perturb^2\bI$) in 
		(\ref{eq:SteinCompact}) gives  (\ref{eq:SteinGradient}--\ref{eq:SteinHessian}), which  is a special case of  Proposition~\ref{prop:Stein}.
		
		\item (\ref{eq:SteinGradient2}) can be  obtained 
		using $ \E(\direction)=\zero $ 
		and  $ \nabla \loss_{\perturb}\ilparenthesis{\btheta} = - \E  \bracket{\perturb^{-1}  \direction \loss\ilparenthesis{\btheta-\perturb\direction}    } $. 	
		(\ref{eq:SteinHessian2}) can be obtained using $ \E \ilparenthesis{\direction\direction^\transpose} = \bI  $ 
		and  $ \nabla^2\loss_{\perturb}\ilparenthesis{\btheta} = \E \bracket{\perturb^{-2}   \parenthesis{\direction\direction^\transpose-\bI} \loss\ilparenthesis{\btheta-\perturb\direction}   }  $. 	
		\item   (\ref{eq:SteinHessian3}) can be  directly derived from  (\ref{eq:SteinHessian2}).  
	\end{enumerate} 
		\end{proof}

	\begin{proof}
		[Proof of Lemma~\ref{lem:BiasGradient}]   
		\begin{itemize}
			\item \textbf{Bias for (\ref{eq:ghat-3})}	 
			\begin{align}
			& 	\E_k \ilparenthesis{\hbg_k} \stackrel{ \text{(\ref{eq:ghat-3})} }{=} (2\perturb_k)^{-1} \E_k\set{ \ilbracket{ \loss\ilparenthesis{\hbtheta_k+\perturb_k\direction_k} + \lossNoise_k^+ - \loss\ilparenthesis{\hbtheta_k - \perturb_k\direction_k} - \lossNoise_k^- } \direction_k} \nonumber\\
			&\stackrel{\text{a.s.}}{=} (2\perturb_k)^{-1} \parenthesis{\E_k  \ilset{ \ilbracket{ \loss\ilparenthesis{\hbtheta_k+\perturb_k\direction_k}   - \loss\ilparenthesis{\hbtheta_k - \perturb_k\direction_k}  } \direction_k} + \E_k \ilbracket{ \direction_k  \E\ilparenthesis{\given{\lossNoise_k^+ - \lossNoise_k^-}{\hbtheta_k,\direction_k}}  } }  \label{eq:BiasGradient-2} \\
			&\stackrel{\text{a.s.}}{=} (2\perturb_k)^{-1} \E_k \parenthesis{\E \set{\given{  \ilbracket{\loss\ilparenthesis{\hbtheta_k+\perturb_k\direction_k} - \loss\ilparenthesis{\hbtheta_k-\perturb_k}  }\direction_k }{\hbtheta_k}}} \label{eq:BiasGradient-3} \\
			&\stackrel{\text{a.s.}}{=} \nabla\loss_{\perturb_k}\ilparenthesis{\hbtheta_k}\,, \label{eq:BiasGradient-4}
			\end{align}
					where (\ref{eq:BiasGradient-2}) is due to \citet[Thm. 9.1.3 on p. 315]{chung2001course}, (\ref{eq:BiasGradient-3}) is valid given A.\ref{assume:Noise}, and (\ref{eq:BiasGradient-4}) is due to (\ref{eq:SteinGradient2-2}). When A.\ref{assume:loss3Lips} holds, we have  $ |  \loss\ilparenthesis{\bzeta} - \loss\ilparenthesis{\btheta} - \ilparenthesis{\bzeta-\btheta}^\transpose\bg\ilparenthesis{\btheta}  -2^{-1}\ilparenthesis{\bzeta-\btheta}^\transpose\bH\ilparenthesis{\btheta}\ilparenthesis{\bzeta-\btheta}   |  = O(\norm{\bzeta-\btheta}^3) $ by Taylor expansion.  Moreover, 
			\begin{align}
			2\perturb\ilbracket{\nabla \loss_{\perturb}\ilparenthesis{\btheta} - \bg\ilparenthesis{\btheta}} 
			&\stackrel{\text{(\ref{eq:SteinGradient2-2})}}{=} \E_{\direction} \ilset{\ilbracket{\loss\ilparenthesis{\btheta+\perturb\direction} - \loss\ilparenthesis{\btheta-\perturb\direction}} {\direction}} - 2\perturb\bg\ilparenthesis{\btheta} \nonumber\\
			&\,=\E \set{\ilbracket{ \loss\ilparenthesis{\btheta+\perturb\direction} - \loss\ilparenthesis{\btheta }  - \perturb\direction^\transpose\bg\ilparenthesis{\btheta}  - 2^{-1} \perturb^2  \direction^\transpose\bH\ilparenthesis{\btheta} \direction     } \direction   }    \nonumber\\
			&\quad  -\E  \set{ \ilbracket{\loss\ilparenthesis{\btheta-\perturb\direction }  -\loss\ilparenthesis{\btheta } +  \perturb\direction^\transpose\bg\ilparenthesis{\btheta}        - 2^{-1} \perturb^2  \direction^\transpose\bH\ilparenthesis{\btheta} \direction    } \direction  } \,, \label{eq:BiasGradient-5}
			\end{align} where (\ref{eq:BiasGradient-5}) uses    $ \E_{\direction} \ilbracket{ \direction^\transpose\bg\ilparenthesis{\btheta} \direction } = \bg\ilparenthesis{\btheta} $  for  $\direction\sim\normalDist\ilparenthesis{\zero,\bI}$ per   A.\ref{assume:Perturbation}.  Using the triangle inequality and the higher-moments for multivariate chi-squared distribution,   we have 
			\begin{equation}
			\norm{\nabla\loss_{\perturb}\ilparenthesis{\btheta} - \bg\ilparenthesis{\btheta}} \le \frac{\perturb^2O(1)}{6} \E_{\direction} \ilparenthesis{ \norm{\direction}^5 } =  \perturb^2 \times O(\Dimension^{\nicefrac{5}{2}})\,. \nonumber
			\end{equation}
			Combining above  with (\ref{eq:BiasGradient-4}), we have  $ \E_k\ilparenthesis{\hbg_k} \stackrel{\text{a.s.}}{=}\bg\ilparenthesis{\hbtheta_k} + O(\perturb_k^2) $ for (\ref{eq:ghat-3}).

			\item \textbf{Bias for (\ref{eq:ghat-2})}
Similarly, $ \E_k \ilparenthesis{\hbg_k} \stackrel{\text{a.s.}}{=} \nabla \loss_{\perturb_k}\ilparenthesis{\hbtheta_k}$	
 for estimator (\ref{eq:ghat-2}) thanks to (\ref{eq:SteinGradient2-1}). Moreover,
 \begin{align}
 \perturb\ilbracket{\nabla\loss_{\perturb} \ilparenthesis{\btheta} - \bg\ilparenthesis{\btheta}} &\stackrel{\text{(\ref{eq:SteinGradient2-1})}}{=} \E_{\direction} \ilset{\ilbracket{\loss\ilparenthesis{\btheta+\perturb\direction} - \loss\ilparenthesis{\btheta} }\direction} - \perturb\bg\ilparenthesis{\btheta} \nonumber\\
 &= \E \set{\ilbracket{ \loss\ilparenthesis{\btheta+\perturb\direction} - \loss\ilparenthesis{\btheta }  - \perturb\direction^\transpose\bg\ilparenthesis{\btheta}  - 2^{-1} \perturb^2  \direction^\transpose\bH\ilparenthesis{\btheta} \direction     } \direction   }  \,,  \label{eq:BiasGradient-6}
 \end{align}
 where (\ref{eq:BiasGradient-6}) uses   $ \E_{\direction} \ilbracket{ \direction^\transpose\bg\ilparenthesis{\btheta} \direction } = \bg\ilparenthesis{\btheta} $  and $ \E_{\direction}\ilset{ \ilbracket{\direction^\transpose\bH\ilparenthesis{\btheta}\direction} \direction} = \zero $ for $\direction\sim\normalDist\ilparenthesis{\zero,\bI}$. For estimator (\ref{eq:ghat-2}), $ \E_k\ilparenthesis{\hbg_k} \stackrel{\text{a.s.}}{=}\bg\ilparenthesis{\hbtheta_k} + O(\perturb_k^2) $ can be similarly derived.

 \item \textbf{Bias for (\ref{eq:ghat-1})}  Still, $ \E_k \ilparenthesis{\hbg_k} \stackrel{\text{a.s.}}{=} \nabla \loss_{\perturb_k}\ilparenthesis{\hbtheta_k}$	
 for estimator (\ref{eq:ghat-1}) thanks to (\ref{eq:SteinGradient}). Moreover,
 \begin{align}
 \perturb\ilbracket{\nabla\loss_{\perturb} \ilparenthesis{\btheta} - \bg\ilparenthesis{\btheta}} &\stackrel{\text{(\ref{eq:SteinGradient})}}{=} \E_{\direction} \ilset{ {\loss\ilparenthesis{\btheta+\perturb\direction}  }\direction} - \perturb\bg\ilparenthesis{\btheta} \nonumber\\
 &= \E \set{\ilbracket{ \loss\ilparenthesis{\btheta+\perturb\direction} - \loss\ilparenthesis{\btheta }  - \perturb\direction^\transpose\bg\ilparenthesis{\btheta}  - 2^{-1} \perturb^2  \direction^\transpose\bH\ilparenthesis{\btheta} \direction     } \direction   }  \,,  \label{eq:BiasGradient-7}
 \end{align}
 where (\ref{eq:BiasGradient-7}) uses    $ \E_{\direction}\ilbracket{\loss\ilparenthesis{\btheta}\direction}  = \zero $, $ \E_{\direction}\ilbracket{\direction^\transpose\bg\ilparenthesis{\btheta}\direction} = \bg\ilparenthesis{\btheta} $, and $ \E_{\direction}\ilset{ \ilbracket{\direction^\transpose\bH\ilparenthesis{\btheta}\direction} \direction} = \zero $. 
	\end{itemize}

		The dominant contributor to the asymptotic variance of each element in $\hbg_k$ is the $ O(\perturb_k ^{-1})  $ times the noise  term, leading to a variance that is asymptotically proportional to $ \perturb_k^{-2} $ with constant of proportionality independent of $k$ because $ \Var\ilparenthesis{\lossNoise_k^{+} - \lossNoise_k^{-}  }  $ is asymptotically constant in $k$ assumed in  A.\ref{assume:Noise}.  	Furthermore, each elements at an arbitrary position in the $ O(\perturb_k^{-1}) $ vector, as derived from (\ref{eq:ghat}), are uncorrelated across $k$ by the independence assumptions on $ \ilset{\direction_k} $.  Consequently, $ \E_k\ilparenthesis{\norm{\hbg_k}^2}\stackrel{\mathrm{a.s.}}{=}  O(\perturb_k^{-2})$.  	
		\end{proof}

	\begin{proof} 
		[Proof of Lemma~\ref{lem:BiasHessian}]   
			\remove{
			\item 
		When $\bH \ilparenthesis{\cdot}$ is Lipschitz, we have
		\begin{align}
		&\nabla ^2 \loss_{\perturb}\ilparenthesis{\btheta}  - \bH\ilparenthesis{\btheta}\nonumber\\
		&= \E _{\direction} \set{\bracket{ \frac{\loss\ilparenthesis{\btheta+\perturb\direction} +\loss\ilparenthesis{\btheta-\perturb\direction} - 2\loss\ilparenthesis{\btheta}}{2\perturb^2} } \parenthesis{\direction\direction^\transpose-\bI} } - \bH\ilparenthesis{\btheta} \nonumber\\
		&= \parenthesis{2\perturb^2}^{-1}  \E _{\direction}   \set{    \bracket{\loss\ilparenthesis{\btheta+\perturb\direction} -  \loss\ilparenthesis{\btheta} - \perturb\direction^\transpose\bg\ilparenthesis{\btheta } - 2^{-1 }\perturb^2 \direction^\transpose\bH\ilparenthesis{\btheta}\direction }     \parenthesis{\direction\direction^\transpose-\bI }   } \nonumber\\
		&\quad +  \parenthesis{2\perturb^2}^{-1}  \E _{\direction}   \set{    \bracket{\loss\ilparenthesis{\btheta-\perturb\direction} -  \loss\ilparenthesis{\btheta} + \perturb\direction^\transpose\bg\ilparenthesis{\btheta } - 2^{-1 }\perturb^2 \direction^\transpose\bH\ilparenthesis{\btheta}\direction }     \parenthesis{\direction\direction^\transpose-\bI }   }    \label{eq:H1}
		\end{align}
		where (\ref{eq:H1}) uses $ \E_{\direction}\bracket{\direction^\transpose\bg\ilparenthesis{\btheta}\times\ilparenthesis{\direction\direction^\transpose-\bI}} = \zero $ and  $  \E_{\direction} \bracket{2^{-1}{\direction^\transpose \bH\ilparenthesis{\btheta}\direction} \times  \parenthesis{\direction\direction^\transpose-\bI}  } =\bH\ilparenthesis{\btheta} $. 
		
		Using (\ref{eq:Lips2}) and triangle inequality, we have 
		\begin{align}
		\norm{\nabla ^2 \loss_{\perturb}\ilparenthesis{\btheta}  - \bH\ilparenthesis{\btheta}}&\le  \frac{\perturb\LipsPara_2\ilparenthesis{\loss}}{6}  \E _{\direction} \parenthesis{\norm{\direction} ^3 \norm{\direction\direction^\transpose-\bI }}  = \frac{\LipsPara_2\ilparenthesis{\loss}}{6} \perturb \times  O(\Dimension^{\nicefrac{5}{2}})
		\end{align} 
		
	} When A.\ref{assume:Noise} and A.\ref{assume:Perturbation} hold,   we have 
$\E_k\ilparenthesis{\hbH_k }\stackrel{\mathrm{a.s.}}{=} \nabla^2\loss_{\perturb_k}\ilparenthesis{\hbtheta_k}$ for estimator (\ref{eq:Hhat-3})  following the proof of Lemma~\ref{lem:BiasGradient}.   When A.\ref{assume:loss3Lips} holds, i.e.,   $ \norm{\nabla^3\loss\ilparenthesis{\btheta} - \nabla^3\loss\ilparenthesis{\bzeta}} \le \LipsPara_3\norm{\btheta-\bzeta} $,   by  mean-value theorem we have  $ |  \loss\ilparenthesis{\bzeta} - \loss\ilparenthesis{\btheta} - \ilparenthesis{\bzeta-\btheta}^\transpose\bg\ilparenthesis{\btheta}  -2^{-1}\ilparenthesis{\bzeta-\btheta}^\transpose\bH\ilparenthesis{\btheta}\ilparenthesis{\bzeta-\btheta} - 6^{-1}\nabla^3\loss\ilparenthesis{\btheta} \bracket{ \ilparenthesis{\bzeta-\btheta}\otimes   \ilparenthesis{\bzeta-\btheta} \otimes  \ilparenthesis{\bzeta-\btheta} } |  \le 
24^{-1} \LipsPara_3 \norm{\bzeta-\btheta}^4 $.
	   Moreover,
		\begin{align}
		& 2\perturb^2 \ilbracket{\nabla ^2 \loss_{\perturb}\ilparenthesis{\btheta}  - \bH\ilparenthesis{\btheta}} \nonumber\\
		&= \E _{\direction} \set{\bracket{  {\loss\ilparenthesis{\btheta+\perturb\direction} +\loss\ilparenthesis{\btheta-\perturb\direction} - 2\loss\ilparenthesis{\btheta}}  } \parenthesis{\direction\direction^\transpose-\bI} } - 2\perturb^2 \bH\ilparenthesis{\btheta} \nonumber\\
		&=    \E     \set{    \bracket{\loss\ilparenthesis{\btheta+\perturb\direction} -  \loss\ilparenthesis{\btheta} - \perturb\direction^\transpose\bg\ilparenthesis{\btheta } - 2^{-1 }\perturb^2 \direction^\transpose\bH\ilparenthesis{\btheta}\direction  - 6^{-1} \perturb^3 \nabla^3\loss\ilparenthesis{\btheta}  \parenthesis{\direction\otimes\direction\otimes\direction}      }     \parenthesis{\direction\direction^\transpose-\bI }   } \nonumber\\
		&\,  +    \E   \set{    \bracket{\loss\ilparenthesis{\btheta-\perturb\direction} -  \loss\ilparenthesis{\btheta} + \perturb\direction^\transpose\bg\ilparenthesis{\btheta } - 2^{-1 }\perturb^2 \direction^\transpose\bH\ilparenthesis{\btheta}\direction   +  6^{-1} \perturb^3 \nabla^3\loss\ilparenthesis{\btheta}  \parenthesis{\direction\otimes\direction\otimes\direction}   }     \parenthesis{\direction\direction^\transpose-\bI }   }\,,       \label{eq:H2}
		\end{align}where (\ref{eq:H2}) uses $ \E_{\direction}\bracket{\direction^\transpose\bg\ilparenthesis{\btheta}\times\ilparenthesis{\direction\direction^\transpose-\bI}} = \zero $,   $  \E_{\direction} \bracket{2^{-1}{\direction^\transpose \bH\ilparenthesis{\btheta}\direction} \times  \parenthesis{\direction\direction^\transpose-\bI}  } =\bH\ilparenthesis{\btheta} $, and $ \E_{\direction} \bracket{ \nabla^3\loss\ilparenthesis{\btheta} \parenthesis{\direction\otimes\direction\otimes\direction}   \times  \parenthesis{\direction\direction^\transpose-\bI } }  = \zero $ when $\direction$ follows A.\ref{assume:Perturbation}. 
Combined with triangle inequality and the higher-moments for multivariate chi-squared distribution, we have 
		\begin{equation*}
		\norm{\nabla ^2 \loss_{\perturb}\ilparenthesis{\btheta}  - \bH\ilparenthesis{\btheta}}\le  \frac{\perturb^2\LipsPara_3}{24}  \E _{\direction} \parenthesis{\norm{\direction} ^4 \norm{\direction\direction^\transpose-\bI }}  = \frac{\LipsPara_3}{24} \perturb^2 \times  O(\Dimension^{\nicefrac{7}{2}})\,. 
		\end{equation*}  
		Therefore, $\E_k\ilparenthesis{\hbH_k}\stackrel{\mathrm{a.s.}}{=}\bH\ilparenthesis{\hbtheta_k} + O(\perturb_k^2)$ for (\ref{eq:Hhat-3}). We can similarly derive  $\E_k\ilparenthesis{\hbH_k}\stackrel{\mathrm{a.s.}}{=}\bH\ilparenthesis{\hbtheta_k} + O(\perturb_k^2)$ for the other three estimators (\ref{eq:Hhat-0}--\ref{eq:Hhat-2}).

		 	The dominant contributor to the asymptotic variance of each element in $\hbH_k$ is the $ O(\perturb_k ^{-2})  $ times the noise  term, leading to a variance that is asymptotically proportional to $ \perturb_k^{-4} $ with constant of proportionality independent of $k$ because $ \Var\ilparenthesis{\lossNoise_k^{+} + \lossNoise_k^{-} - 2 \lossNoise_k }  $ is asymptotically constant in $k$ and because of A.\ref{assume:Noise}.  	Furthermore, each elements at an arbitrary position in the $ O(\perturb_k^{-2}) $ matrix, as derived from (\ref{eq:Hhat}), are uncorrelated across $k$ by the independence assumptions on $ \ilset{\direction_k} $.  Consequently, $ \E_k\ilparenthesis{\norm{\hbH_k}^2}\stackrel{\mathrm{a.s.}}{=}  O(\perturb_k^{-4})$. 
	\end{proof}

	\remove{ 
	
	\begin{proof}
		[Proof of Lemma~\ref{lem:BiasHessianPrime}]
		When A.\ref{assume:loss4Bound} holds, we have $ \loss\ilparenthesis{\bzeta}  =  \loss\ilparenthesis{\btheta} + \ilparenthesis{\bzeta-\btheta}^\transpose\bg\ilparenthesis{\btheta} + 2^{-1} \ilparenthesis{\bzeta-\btheta}^\transpose\bH\ilparenthesis{\btheta} \ilparenthesis{\bzeta-\btheta} + 6^{-1} \nabla^3\loss\ilparenthesis{\btheta} \bracket{\ilparenthesis{\bzeta-\btheta}\otimes\ilparenthesis{\bzeta-\btheta}\otimes \ilparenthesis{\bzeta-\btheta}}  +  O (\norm{\bzeta-\btheta}^4)  $ by Taylor expansion plus the mean-value forms of the remainder. The rest follows similarly as 
the proof for Lemma~\ref{lem:BiasHessian}.
		\end{proof}
	
}

	\begin{proof}
		[Proof of Thm.~\ref{thm:ParameterConvergence}]
	From Lemma~\ref{lem:BiasGradient}, we have $ \E_k\ilparenthesis{\hbg_k}  = \bg\ilparenthesis{\hbtheta_k} + \bias_k
		$ with $ \perturb_k^{-2} \norm{\bias_k} $ being uniformly bounded for sufficiently large $k$. The rest follows from \citep[Thm. 1a]{spall2000adaptive}.  
		Note that the proof for \citep[Thm. 1a]{spall2000adaptive} does not assume any particular form of the Hessian estimate; it only requires  A.\ref{assume:PDmapping} and the result in Lemmas~\ref{lem:BiasGradient}--\ref{lem:BiasHessian}. 	
	\end{proof}

		\begin{proof}[Proof of Thm.~\ref{thm:HessianConvergence}]
		The proof proceeds similarly as \citep[Appendix A]{zhu2020error}.  Let $\bW_k \equiv \hbH_k - \E_k\ilparenthesis{\hbH_k} $, which satisfies  $\E\bW_k=\zero$ for all $k$. 
		Thanks to      Lemma~\ref{lem:BiasHessian}, $ \E\ilparenthesis{\perturb_k^4\norm{\hbH_k}^2} <\infty $ uniformly for  all $k$. 
		When  A.\ref{assume:Stepsize} holds,  we have $ \sum_k \parenthesis{   {\E\norm{\gainH_k \bW_k}^2}   }<\infty $. By  martingale convergence theorem,  we have $ \sum_k \gainH_k \bW_k \to \zero $ a.s.  Now  that 
		Lemma~\ref{lem:BiasHessian} gives $ \E_k\ilparenthesis{\hbH_k }  = \bH\ilparenthesis{\hbtheta_k} + O(\perturb_k^2) $, we have $ \sum_k\ilbracket{ \gainH_k \E_k \ilparenthesis{\hbH_k} } = \sum_k \ilset{ \gainH_k \ilbracket{\bH\ilparenthesis{\hbtheta_k} + O(\perturb_k^2)} }  \to \bH\ilparenthesis{\bvartheta} $ a.s., using  A.\ref{assume:loss3Lips} (Hessian is continuous around $\hbtheta_k$),  Theorem~\ref{thm:ParameterConvergence} ($\hbtheta_k $ converges a.s. to $\bvartheta$), and A.\ref{assume:Stepsize} ($ \gainH_k \perturb_k^2 \to 0  $). 
	\end{proof}

\begin{proof}[Proof of Thm.~\ref{thm:Normality}]
	The proof can be derived following the proofs of  the general result \citet[Thm.3]{zhu2020harp} after specifying the conditioner and the distribution for the perturbation.   We skip the sketch here as it involves quite a few    notations not introduced here.  
	\remove{
Before stating Theorem~\ref{thm:Normality}, we state the consequences of previous Lemmas and Theorems. Write the conditioner as $ \bPsi_k = \ilbracket{ \mappingPD_k\ilparenthesis{\obH_k}}^{-1} $, and $ \bPsi = \lim_{k\to\infty}\bPsi_k = \ilbracket{\bH\ilparenthesis{\bvartheta}}^{-1} $ per Theorem~\ref{thm:HessianConvergence}. 
Let $\bGamma_k \equiv \gain \bPsi_k \bH\ilparenthesis{\obtheta_k}$ with $\obtheta_k$ being some convex combination of $\hbtheta_k$ and   $\bvartheta$, and $ \bGamma = \lim_{k\to\infty} \bGamma_k  =  \gain\bI $ per Theorems~\ref{thm:ParameterConvergence}--\ref{thm:HessianConvergence}.  
We also 
introduce some notation. 
We enforce the form of the stepsizes: $ \gain_k = \nicefrac{\gain}{(k+1)^{\upalpha}} $, $ \perturb_k=\nicefrac{\perturb}{(k+1)^{\upgamma}} $, and $ \gainH_k $ satisfies A.\ref{assume:Stepsize}.  Let $\uptau\equiv \upalpha-2\upgamma$ and $ \uptau_+ \equiv \uptau\cdot\indicator_{\ilset{\upalpha=1}} $. 
Let the bias term of the gradient estimator be $ \bias_k\equiv  \E\ilbracket{  \hbg_k - \bg\ilparenthesis{\hbtheta_k} }  $, and let the corresponding noise term be $ \noise_k\equiv  \hbg_k - \E_k\ilparenthesis{\hbg_k} $.  
Let $\bm{t}_k = -\gain\ilparenthesis{k+1}^{\nicefrac{\uptau}{2}}  \bPsi_k  \bias_k$ and $\bv_k = -\gain \ilparenthesis{k+1}^{-\upgamma} \noise_k
$. 	
}
	\end{proof}

	\remove{ 

	\begin{proof}
			The aim is to find the $\gainHweighted_k^{(\iterTotal)}$ that minimize $ \sum_{k=0}^{\iterTotal} (\gainHweighted_k^{(\iterTotal)})^2 \perturb_k^{-4} $ subject to the constraints in (\ref{eq:Newton-3}). The solution can be found via the method of Lagrange multipliers as $ \gainHweighted_k^{(\iterTotal)} = \nicefrac{\perturb_k^4}{ \sum_{i=0}^{\iterTotal} \perturb_i^4 }$ for all $0\le k \le \iterTotal$. 
		
		Note that the recursive form of  (\ref{eq:Newton-2})  has an equivalent batch form for $\iterTotal$ total iterations:
		\begin{equation}
		\label{eq:Newton-3}
		\obH_{\iterTotal+1} = \sum_{k=0} ^{\iterTotal} \gainHweighted_k ^{(\iterTotal) } \hbH_k \,, \mathrm{ s.t. }\,\,  \gainHweighted_k^{(\iterTotal)}\ge 0\,\,  \mathrm{ and } \,\, \sum_{k=0}^{\iterTotal} \gainHweighted_k^{(\iterTotal)} = 1\,, 
		\end{equation}	 
		with  $\gainH_0=1$. The straightforward relationship between the weights $ \gainHweighted_k^{(\iterTotal)} $ in the batch-form (\ref{eq:Newton-3}) and the gain sequence $ \gainH_k $ for the online  Hessian estimate in (\ref{eq:Newton-2}) is $ \gainHweighted_k^{(\iterTotal)} = \gainH_k \prod_{i=k+1}^{(\iterTotal)} (1-\gainH_i ) $. 
		\end{proof}

\begin{proof}

\begin{itemize}

\item If $ \norm{\bg\ilparenthesis{\btheta} - \bg\ilparenthesis{\bzeta}} \le \LipsPara_1\ilparenthesis{\loss}\norm{\btheta-\bzeta} $, then we have the following  by mean-value theorem
\begin{equation}
\abs{\loss\ilparenthesis{\bzeta} - \loss\ilparenthesis{\btheta} - \ilparenthesis{\bzeta-\btheta}^\transpose\bg\ilparenthesis{\btheta}} \le  \frac{1}{2}\LipsPara_1\ilparenthesis{\loss}\norm{\btheta-\bzeta}^2
\end{equation}

\item If $ \norm{\bH\ilparenthesis{\btheta} - \bH\ilparenthesis{\bzeta}} \le \LipsPara_2\ilparenthesis{\loss}\norm{\btheta-\bzeta} $, then we have the following  by mean-value theorem
\begin{equation}\label{eq:Lips2}
\abs{\loss\ilparenthesis{\bzeta} - \loss\ilparenthesis{\btheta} - \ilparenthesis{\bzeta-\btheta}^\transpose\bg\ilparenthesis{\btheta}  - \frac{1}{2}\ilparenthesis{\bzeta-\btheta}^\transpose\bH\ilparenthesis{\btheta}\ilparenthesis{\bzeta-\btheta} } \le  \frac{1}{6}\LipsPara_2\ilparenthesis{\loss}\norm{\btheta-\bzeta}^3
\end{equation}

\end{itemize}
	
	\end{proof}

		\begin{rem}
		\begin{equation}
		\norm{\nabla^2\loss_{\perturb}\ilparenthesis{\btheta} - \bH\ilparenthesis{\btheta}} \le \E_{\direction} \norm{\bH \ilparenthesis{\btheta+\perturb\direction} - \bH\ilparenthesis{\btheta} } \le 
		\LipsPara_2\ilparenthesis{\loss} \perturb\E_{\direction}\norm{\direction}  \lesssim \perturb\sqrt{\Dimension} \LipsPara_2\ilparenthesis{\loss}
		\end{equation}
	\end{rem}

\begin{proof}
	\begin{itemize}
		\item When $\loss\ilparenthesis{\cdot}$ is $\LipsPara_0\ilparenthesis{\loss }$-Lipschitz-continuous, 
		 \begin{align}
		 	\abs{\loss_{\perturb}\ilparenthesis{\btheta} - \loss\ilparenthesis{\btheta}} &= \abs{\E_{\direction}\bracket{ \loss\ilparenthesis{\btheta+\perturb\direction } - \loss\ilparenthesis{\btheta}}}\nonumber\\
		 	&\le \E _{\direction} \abs{\loss\ilparenthesis{\btheta+\perturb\direction} - \loss\ilparenthesis{\btheta}} \nonumber\\
		 	&\le \perturb\LipsPara_0\ilparenthesis{\loss } \E_{\direction} \norm{\direction}
		 	\nonumber\\
		 	&= \perturb\LipsPara_0  \ilparenthesis{\loss } \times\sqrt{2} \frac{\Gamma \ilparenthesis{\nicefrac{ \parenthesis{1+\Dimension} }{2}} }{\Gamma\ilparenthesis{\nicefrac{\Dimension}{2}}} \label{eq:chi1}\\
		 	&\lesssim \perturb  \sqrt{\Dimension} \LipsPara_0 \ilparenthesis{\loss } \label{eq:chi2}
		 	\end{align}
	 	where (\ref{eq:chi1}) uses the $ \nicefrac{1}{2} $th-moment formula of the chi-squared distribution.  
	 	
	 	\item When $\bg\ilparenthesis{\cdot}$ is $\LipsPara_1\ilparenthesis{\loss }$-Lipschitz-continuous, 
	 	\begin{align}  
	 	\abs{ \loss_{\perturb}\ilparenthesis{\btheta} - \loss \ilparenthesis{\btheta}} &= \abs{\E_{\direction} \bracket{ \loss \ilparenthesis{\btheta+\perturb\direction} - \loss \ilparenthesis{\btheta} }} \nonumber\\
	 	&=  \abs{\E_{\direction} \bracket{\loss\ilparenthesis{\btheta+\perturb\direction} - \loss\ilparenthesis{\btheta} - \perturb\direction^\transpose\bg\ilparenthesis{\btheta}}} \label{eq:L1} \\
	 	& \le \E _{\direction} \norm{ \loss\ilparenthesis{\btheta+\perturb\direction}  - \loss\ilparenthesis{\btheta} - \perturb\direction^\transpose\bg\ilparenthesis{\btheta} } \nonumber\\
	 	& \le \frac{1}{2} \perturb^2 \LipsPara_1\ilparenthesis{\loss}  \E _{\direction}\parenthesis{ \norm{\direction}^2} \nonumber\\
	 	& = \frac{1}{2} \perturb^2\LipsPara_1\ilparenthesis{\loss}  \times 2 \frac{\Gamma\ilparenthesis{1+\nicefrac{\Dimension}{2}}}{\Gamma\ilparenthesis{\nicefrac{\Dimension}{2}}}  \label{eq:chi3}\\
	 	& \lesssim  \frac{1}{2}\perturb^2 \Dimension \LipsPara_1\ilparenthesis{\loss}
	 	\end{align}
	 	where (\ref{eq:L1}) is due to $ \E_{\direction} \direction = \zero $, (\ref{eq:chi3}) uses the first-moment formula of the chi-squared distribution.

	 	\item When $\bH\ilparenthesis{\cdot}$ is $\LipsPara_2\ilparenthesis{\loss}$-Lipschitz-continuous, 
	 	\begin{align} 
	 	\abs{\loss_{\perturb}\ilparenthesis{\btheta} - \loss\ilparenthesis{\btheta}  - \frac{\perturb^2}{2} \direction^\transpose\bH\ilparenthesis{\btheta} \direction } 
	 	&= \abs{ \E_{\direction} \bracket{ \loss\ilparenthesis{\btheta+\perturb\direction} - \loss\ilparenthesis{\btheta} - \perturb\direction^\transpose\bg\ilparenthesis{\btheta} - \frac{\perturb^2}{2}\direction^\transpose\bH\ilparenthesis{\btheta}\direction } }  \nonumber\\
	 	&\le \frac{1}{6}\perturb^3 \LipsPara_2\ilparenthesis{\loss} \E _{\direction}\parenthesis{ \norm{\direction}^3} \nonumber\\
	 	&= \frac{1}{6}\perturb^3\LipsPara_2\ilparenthesis{\loss}\times 2^{\nicefrac{3}{2}} \frac{ \Gamma \ilparenthesis{\nicefrac{\ilparenthesis{3+\Dimension}}{2}} }{ \Gamma\ilparenthesis{ \nicefrac{\Dimension}{2} } } \label{eq:chi4}\\
	 	&\lesssim \frac{1}{6} \perturb^3 \ilparenthesis{\Dimension+1}^{\nicefrac{3}{2}} \LipsPara_2\ilparenthesis{\loss} \label{eq:approx1}
	 	\end{align}
	 	where (\ref{eq:chi4}) uses the $ \nicefrac{3}{2} $th-moment formula of the chi-squared distribution.
	\end{itemize}
	
	\end{proof}

\begin{proof}
	$ \LipsPara_1\ilparenthesis{\loss_{\perturb}} \le  \frac{\sqrt{\Dimension}}{\perturb} \LipsPara_0\ilparenthesis{\loss } $
	
	\begin{align}
	\norm{\nabla\loss_{\perturb}\ilparenthesis{\btheta} - \nabla\loss_{\perturb}\ilparenthesis{\bzeta}}  & =  \perturb^{-1} \E_{\direction} \set{\bracket{ \loss\ilparenthesis{\btheta+\perturb\direction} -\loss\ilparenthesis{\bzeta+\perturb\direction} } \cdot \direction} \nonumber\\
	&  \le \perturb^{-1} \LipsPara_0\ilparenthesis{\loss} \norm{\btheta-\bzeta} \E_{\direction} \norm{\direction} \nonumber\\
	& \lesssim \perturb^{-1} \sqrt{\Dimension} \LipsPara_0\ilparenthesis{\loss} \norm{\btheta-\bzeta}
	\end{align}
	\end{proof}

\begin{proof}
	When $\bg\ilparenthesis{\cdot}$ is Lipschitz, we have
	\begin{align}
		\norm{\nabla \loss_{\perturb}\ilparenthesis{\btheta } - \bg\ilparenthesis{\btheta}} &= \E_{\direction} \set{  \bracket{  \frac{\loss\ilparenthesis{\btheta+\perturb\direction}-\loss\ilparenthesis{\btheta}}{\perturb} - \direction^\transpose \bg\ilparenthesis{\btheta} } \direction } \nonumber\\
		&\le  \frac{\perturb\LipsPara_1\ilparenthesis{\loss}}{2 } \E_{\direction}\parenthesis{ \norm{\direction} ^3  } \nonumber\\
		&\le  \frac{\LipsPara_1\ilparenthesis{\loss}}{2 }\perturb (\Dimension+1)^{\nicefrac{3}{2}}
		\end{align}
	
	When $\bH\ilparenthesis{\cdot}$ is Lipschitz, we have
	\begin{align}
	\norm{	\nabla \loss_{\perturb}\ilparenthesis{\btheta} -\bg\ilparenthesis{\btheta} }&=\norm{ \E_{\direction} \set{  \bracket{  \frac{\loss\ilparenthesis{\btheta+\perturb\direction}-\loss\ilparenthesis{\btheta-\perturb\direction}}{2\perturb} - \direction^\transpose \bg\ilparenthesis{\btheta} } \direction }} \label{eq:g1} \\
		&\stackrel{(\ref{eq:Lips2})}{\le}   \frac{1}{6} \perturb^2   \LipsPara_2\ilparenthesis{\loss} \E _{\direction}\parenthesis{ \norm{\direction}^4} \nonumber\\
		&=    \frac{1}{6} \perturb^2   \LipsPara_2\ilparenthesis{\loss}  \times  2^2 \frac{ \Gamma\ilparenthesis{2+\nicefrac{\Dimension}{2}} }{\Gamma \ilparenthesis{\nicefrac{\Dimension}{2}}}    \label{eq:chi5}\\
		&\lesssim  \frac{\LipsPara_2\ilparenthesis{\loss}}{6} \perturb^2  \parenthesis{\Dimension+1}^2 
		\end{align} 
	where (\ref{eq:g1}) uses $ \bg\ilparenthesis{\btheta} = \E_{\direction} \bracket{\direction^\transpose\bg\ilparenthesis{\btheta}\direction} $, and  (\ref{eq:chi5}) uses the second-moment formula of the chi-squared distribution.
	\end{proof}

}

\end{document}